\def\le{\leqslant}
\def\ge{\geqslant}
\def\RS{\mathsf{RS}}
\def\TM{\mathsf{TM}}
\def\leq{\leqslant}
\newtheorem{theorem}{Theorem}
\newtheorem{lemma}[theorem]{Lemma}
\newtheorem{cor}[theorem]{Corollary}
\newtheorem{rem}[theorem]{Remark}
\numberwithin{equation}{section}
\numberwithin{theorem}{section}
\numberwithin{table}{section}
\numberwithin{figure}{section}
\def \Grs{\cG_s(r)}
\def\\{\cr}
\def\({\left(}
\def\){\right)}
\def\[{\left[}
\def\]{\right]}
\def\<{\langle}
\def\>{\rangle}
\def\rf#1{\left\lceil#1\right\rceil}
\def\N{\mathbb{N}}
\def\cF{\mathcal F}
\def\cE{\mathcal E}
\def\cG{\mathcal G}
\def\cH{\mathcal H}
\def\cI{\mathcal I}
\def\cL{\mathcal L}
\def\e{{\mathbf{\,e}}}
\def\R{\mathbb{R}}
\def\mand{\qquad \mbox{and} \qquad}
\def\eqref#1{(\ref{#1})}
\begin{document}

\title{Weyl sums over integers with digital restrictions} 

 \author[I. E. Shparlinski] {Igor E. Shparlinski}
 
\address{Department of Pure Mathematics, University of New South Wales,
Sydney, NSW 2052, Australia}
\email{igor.shparlinski@unsw.edu.au}

 \author[J. M. Thuswaldner] {J{\"o}rg M. Thuswaldner}
 
\address{Department Mathematik und Informationstechnologie, Montanuniversit{\"a}t Leoben,   Le\-o\-ben, A-8700, Austria}
\email{joerg.thuswaldner@unileoben.ac.at}

\date{\today}

\begin{abstract}
We estimate Weyl sums over the integers with sum of binary digits either fixed or restricted by some congruence condition. In our proofs we use ideas that go back to a paper by Banks, Conflitti and the first author (2002). Moreover, we apply the ``main conjecture'' on the Vinogradov mean value theorem which has been established by Bourgain, Demeter and Guth (2016) as well as by Wooley (2016, 2019). We use our result to give an estimate of the discrepancy of point sets that are defined by the values of polynomials at arguments having the sum of binary digits restricted in different ways. 
\end{abstract}

\keywords{Sparse integers, sum of digits, Weyl sum}
\subjclass[2010]{11A63, 11L07}

\maketitle

\section{Introduction}
\subsection{Motivation and set-up} 
The study of the distribution of the $g$-ary sum of digits function in residue classes, using exponential sums, has been initiated  in a seminal paper by Gel'fond~\cite{Gelfond:68} (see~\cite{Fine:65} for a predecessor). Since then, exponential sum methods have been used by a vast number of authors in order to study distribution properties of the sum of digits function. In this area, the three conjectures at the end of Gel'fond's paper~\cite{Gelfond:68} have been a constant motivation. Two of them have been solved, the one on the joint distribution of sum of digits functions with respect to different bases in residue classes (see Kim~\cite{Kim:99}) and the one on the distribution of the sum of digits function on prime values in residue classes (see Mauduit and Rivat~\cite{MR:10}). Gel'fond's third conjecture on the distribution of the sum of digits function on polynomial values in residue classes is still open although there exist some important partial results, see {\it e.g.}~\cite{MR:09}. We also mention the work of Mauduit and S\'ark\"ozy~\cite{MS:97} that has initiated the active study of the arithmetic structure of sets of integers with fixed $g$-ary sum of digits (see, for example,~\cite{BaSh,DKDK,Emi1,Emi2,FM,MPS,MRS} and references therein). 
Exponential and character sums over integers with digital restrictions that are related to the results of the present paper have  been studied for instance in~\cite{BaCoSh,Emi3, OstShp, PfTh, ThTi}, and in particular applications to additive problems such as versions of the Waring problem have been given in~\cite{Emi3,PfTh, ThTi, Wag}.

In the present paper we build upon the ideas of~\cite{BaCoSh, OstShp} and provide estimates for Weyl sums whose range is restricted to integers with various conditions on their sum of digits function, for example,  such as integers  with  fixed sum of digits. As in the case of classical Weyl sums, an important ingredient in our proofs is the version of the Vinogradov mean value theorem established by Bourgain, Demeter and Guth~\cite{BDG} and Wooley~\cite{Wool1,Wool2}. 

Although all our methods work in much larger generality, for convenience and to avoid unnecessary combinatorial complications (external to our method),  we restrict our attention to the binary sum of digits function.

Let $\sigma(n)$ denote the sum of binary digits of $n$,  that is,
$$\sigma(n)=\sum_{j\ge 0}a_j(n),$$
where 
\begin{equation}\label{eq:binrep}
n=\sum_{j\ge 0} a_j(n) 2^j, \qquad a_j(n)\in\{0,1\}.
\end{equation}

For $r,m\in \N$ and $k\in\{0,\ldots,m-1\}$ let
$$
\cE_{k,m}(r) = \{n < 2^r :~ \sigma(n)\equiv k\pmod{m}\}
$$
be the set of integers with $r$ digits whose sum of digits is congruent to $k$ modulo $m$.
Moreover, for any integers $0\le s\le r$, let
$$
 \Grs=\{0 \leq n< 2^r:~\sigma(n)=s\}.
$$ 
Then $ \Grs$ is the set of integers with $r$ digits (in base $2$) such that the sum of the digits is equal to $s$.

As mentioned before, various bounds of exponential and character sums over the integers from the sets $\cE_{k,m}(r)$, $ \Grs$ and some other sets of integers with restricted digits have been considered in~\cite{BaCoSh,FrSh,OstShp}. Here we obtain 
new bounds of {\it Weyl sums} restricted by these sets. In particular,  one can use our method to improve some of the previous results on the variants of Waring's problem  studied in~\cite{Emi3,PfTh, ThTi}.

More precisely, let
$$
\e(z) =  \exp(2 \pi i z),
$$
and   let
\begin{equation}
\label{eq:Poly}
f(Z) = \sum_{i=1}^d \alpha_i Z^i \in \R[Z]
\end{equation}
be a polynomial.  Then the main objective of the present paper is to estimate the Weyl sums
$$
U_f(r,\ell,k,m) = \sum_{n \in \cE_{k,m}(r)}  \e(\ell f(n))
\mand 
S_{f}(r, \ell, s) = \sum_{n \in  \Grs}\e(\ell f(n)), 
$$
as well as Weyl sums twisted by special sequences related to the binary sum of digits function $\sigma$, like the Thue--Morse sequence and the Rudin--Shapiro sequence. From these estimates we  derive equidistribution results.

\subsection{Outline}
The paper is organised as follows. 

In Section~\ref{sec:Main} we formulate our main results giving bounds on the sums
$U_f(r,\ell,k,m)$ and $S_{f}(r, \ell, s)$ together with their applications. 

In Section~\ref{sec:Preps} we give some preparatory results on unrestricted Weyl sums with exponential of the form $\e\(hf(n)\)$, $n =1, \ldots, N$, which are uniform in 
$h$. Although results of this type based on previous versions of the Vinogradov mean value 
theorem have been known~\cite[Theorem~I, Chapter~VI]{Vin}, our result, which is based on the modern
advances~\cite{BDG,Wool1,Wool2} seems to be new and  fills a gap in the literature on this subject. 

Section~\ref{sec:Thue} contains the proofs of our results on the Weyl sums $U_f(r,\ell,k,m)$ whose range is restricted by sum of digits congruences and Weyl sums twisted by the Thue-Morse and the Rudin-Shapiro sequence. Our main tools include the ideas of~\cite{BaCoSh} and the uniform estimates of the classical Weyl sums provided in Section~\ref{sec:Preps}.

In Section~\ref{sec:sparseproof} we prove our results on the Weyl sums $S_{f}(r, \ell, s)$
ranging over sets with fixed sum of digits. Again we use~\cite{BaCoSh} and the estimates of Section~\ref{sec:Preps}. 

Finally, in Section~\ref{sec:equi} we employ the Erd\H{o}s--Tur{\'a}n--Koksma inequality to establish our equidistribution results.

\section{Main results} 
\label{sec:Main} 

\subsection{Concepts and notation used in the main results}  According to Dirichlet's approximation theorem there exist infinitely many  pairs $(a,q)$ of integers with $q \ge 1$ and $\gcd(a,q) = 1$ satisfying
\begin{equation}
\label{eq:Approx}
\left|\alpha_d -\frac{a}{q} \right | < \frac{1}{q^2}.
\end{equation} 
For an irrational number $\alpha$, we define its {\it Diophantine type}
$\tau$ by the relation
$$
\tau=\sup\left\{\vartheta\in\R~:~\liminf_{q\to\infty,~q\in\N}
q^\vartheta\,\|\alpha q\|=0\right\}, 
$$
where  
$$\|x\|=\min\{x-n:~n\in\mathbb{Z}\}
$$ 
denotes the distance between $x\in \R$ and the closest integer. The celebrated results of Khinchin~\cite{Khin} and of Roth~\cite{Roth} assert that $\tau=1$ for almost all reals (in the sense of the Lebesgue measure) and all irrational algebraic numbers $\alpha$, respectively; see also~\cite{Bug,Schm}. Clearly, if  $\alpha$ is of Diophantine type $\tau=1$ then the partial quotients of its continued fraction expansion cannot get too big and, hence, the integers $q \ge 1$ satisfying~\eqref{eq:Approx} for some $a\in\mathbb{Z}$ are well-spaced. 

In the sequel we de denote the fractional part of a real number $y$ by $\{y\}$.  Let $M \subset \N$ be finite and let $f\in \mathbb{R}[Z]$ be given. Then the {\em discrepancy} of the set
$$
\{ \{f(n)\} :~ n\in M  \}
$$
is defined by  (see, for example,~\cite[Equation~(1.11)]{DT:97})
$$
D_f(M) = \sup_{\cI \subseteq [0,1)} \left|
\frac{\#\left\{ n \in M : ~\{f(n)\} \in \cI \right\}}{\# M} - \cL(\cI)
\right|,
$$
where $\cL$ denotes the Lebesgue measure on $[0,1)$.

We recall that the expressions $A(x) \ll B(x)$ and $A(x)=O(B(x))$ are each equivalent to the
statement that $|A(x)|\le cB(x)$ for some constant $c>0$ and for all $x$ larger than some threshold value $x_0 > 0$.  Throughout the paper, the implied constant $c$ in the symbols ``$O$'' and ``$\ll$'' may depend on the positive integer  
parameters $m$ and  $\nu$ (and, where obvious, the real parameter  $\varepsilon> 0$). 

We also write $A(r)^{o(1)}$ for any function $B(r)$ such for any fixed $\varepsilon> 0$ we have $A(r)^{-\varepsilon}  \ll B(r) \ll A(r)^{\varepsilon}$ as $r\to \infty$. In particular, 
$$
 2^{-\varepsilon r}\ll 2^{o(r)} \ll 2^{\varepsilon r} \mand  2^{(1- \varepsilon) r}  \ll 2^{\(1+ o(1)\)r } \ll 2^{(1+\varepsilon) r}  
$$
for any fixed $\varepsilon> 0$.

\subsection{Weyl sums with congruence conditions on the  sum of digits}\label{sec:cong}

We start with the sums $U_f(r,\ell,k,m)$.  We recall that the implied
constants can depend on $m$, however it is easy to make our results uniform with respect to $m$ as well. 

\begin{theorem}
\label{thm:Weyl-cong}
For  any polynomial $f\in \R[Z]$ of degree $d \ge 3$ of the form~\eqref{eq:Poly} and
with the leading coefficient $\alpha_d$ satisfying~\eqref{eq:Approx}, 
we have 
\begin{equation}\label{eq:Weyl-cong}
|U_f(r,\ell,k,m) |  \le       2^{(1+o(1))r} \(\ell^{\eta_1(d)}q^{-\eta_1(d)} + \ell^{\vartheta(d)}2^{-\zeta_1(d)  r} + 2^{-\zeta_2(d)  r} +  2^{-\zeta_3(d)  r} q^{\eta_2(d) } \)^{1/2}
\end{equation} 
as $r\to \infty$, where 
\begin{equation}\label{eq:etazetatheta}
\begin{split}
& \eta_1(d)  =  \frac{1}{d^2-2d+2},  \qquad   \eta_2(d)  = \frac{1}{(d-1)^2},
 \qquad \vartheta(d)=\frac{1}{d(d-1)},\\
& \  \zeta_1(d)   = \frac{d-2}{d(d-1)}, \qquad \zeta_2(d)  = \frac{1}{d^2-3d+3},  \qquad  \zeta_3(d)   = \frac{1}{d-1}. 
\end{split}
\end{equation}
\end{theorem}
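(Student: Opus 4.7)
The plan is to detect the congruence $\sigma(n)\equiv k\pmod m$ by orthogonality, reduce to classical Weyl sums twisted by a digit-sum character, and handle those twisted sums by a digit-splitting and Cauchy--Schwarz argument in the style of~\cite{BaCoSh}, ultimately feeding everything into the uniform estimates for unrestricted Weyl sums developed in Section~\ref{sec:Preps}.

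\textbf{Step 1.} Orthogonality of additive characters modulo $m$ gives
\begin{equation*}
U_f(r,\ell,k,m)=\frac1m\sum_{j=0}^{m-1}\e(-jk/m)\,T_j,\qquad
T_j:=\sum_{n=0}^{2^r-1}\e\!\left(\ell f(n)+\frac{j\sigma(n)}m\right).
\end{equation*}
For $j=0$ the sum $T_0$ is an unrestricted Weyl sum of length $N=2^r$, and the $h$-uniform estimate of Section~\ref{sec:Preps} applied with $h=\ell$ produces contributions of the type $2^{(1+o(1))r}\ell^{\vartheta(d)}2^{-\zeta_1(d)r}$ and $2^{(1+o(1))r}(\ell/q)^{\eta_1(d)}$ already present in~\eqref{eq:Weyl-cong}.

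\textbf{Step 2.} For $1\le j\le m-1$, following~\cite{BaCoSh}, I choose a splitting parameter $t\in\{1,\dots,r\}$ (to be optimised), decompose $n=u+2^tv$ with $0\le u<2^t$ and $0\le v<2^{r-t}$, and use the identity $\sigma(n)=\sigma(u)+\sigma(v)$ to factor
\begin{equation*}
T_j=\sum_{v<2^{r-t}}\e\!\left(\frac{j\sigma(v)}m\right)\sum_{u<2^t}\e\!\left(\frac{j\sigma(u)}m+\ell f(u+2^tv)\right).
\end{equation*}
Applying Cauchy--Schwarz to the outer $v$-sum (whose weights have modulus~$1$) and opening the resulting square gives
\begin{equation*}
|T_j|^2\le 2^{r-t}\sum_{u,u'<2^t}\e\!\left(\frac{j(\sigma(u)-\sigma(u'))}m\right)I(u,u'),
\end{equation*}
where
\begin{equation*}
I(u,u'):=\sum_{v<2^{r-t}}\e\!\left(\ell\bigl[f(u+2^tv)-f(u'+2^tv)\bigr]\right).
\end{equation*}
The digit-sum twist has now disappeared from the inner $v$-sum. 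The diagonal $u=u'$ contributes $O(2^{2r-t})$. For $u\ne u'$ the bracket is a polynomial in $v$ of degree $d-1$ with leading coefficient $\alpha_d\cdot d(u-u')\cdot 2^{t(d-1)}$, so $I(u,u')$ is an unrestricted Weyl sum of length $2^{r-t}$ and degree $d-1$ and is again controlled by the uniform estimates of Section~\ref{sec:Preps}.

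\textbf{Step 3 and main obstacle.} Inserting the resulting bound on $I(u,u')$, summing it over $u\ne u'$, combining with the diagonal $O(2^{2r-t})$, taking square roots, and finally optimising~$t$ in the various regimes of $q$ relative to $\ell$ and $2^r$ is what produces the four competing terms in~\eqref{eq:Weyl-cong} with the exponents~\eqref{eq:etazetatheta}. The delicate bookkeeping here will be the heart of the proof: one has to transfer the approximation~\eqref{eq:Approx} of $\alpha_d$ into one for the new leading coefficient $\ell\alpha_d d(u-u')2^{t(d-1)}$, losing only controllable $\gcd$-type factors that can be summed on average over $u\ne u'$, and then choose $t=t(r,\ell,q,d)$ so as to balance the diagonal loss $2^{2r-t}$ against the off-diagonal error. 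This gcd-averaging and case analysis, together with the $j=0$ bound from Step~1, is precisely what pins down the six explicit exponents appearing in~\eqref{eq:etazetatheta}.
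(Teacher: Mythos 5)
Your proposal follows essentially the same route as the paper; the only structural difference is that you detect the condition $\sigma(n)\equiv k\pmod m$ by orthogonality of additive characters, whereas the paper splits $n=2^ux+y$ first and decomposes $\cE_{k,m}(r)$ directly into the blocks $\cE_{k-j,m}(r-u)\times\cE_{j,m}(u)$ before applying Cauchy--Schwarz. After opening the square both versions land on the same object: a diagonal term $2^{2r-t}$ plus at most $2^{2t}$ inner Weyl sums of length $2^{r-t}$ in a polynomial of degree $d-1$ with leading coefficient $\alpha_d$ times an integer multiplier $h\ll\ell 2^{dt}$, so your repackaging buys nothing and costs nothing (your $j=0$ term is likewise harmlessly absorbed, since $2\vartheta(d)\ge\eta_1(d)$ for $d\ge3$). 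Two comments on your Step~3, which you leave unexecuted. First, the ``gcd-averaging over $u\ne u'$'' you anticipate is not needed: Lemma~\ref{lem:Weyl-h} is already uniform in $h$, and the crude bound $D=\gcd(h,q)\le h$, i.e.\ \eqref{eq:deltawithoutD}, suffices for $d\ge3$. Second, the optimisation of $t$ is not a case analysis over regimes of $q$ and $\ell$: after inserting \eqref{eq:Bound T o(1)} with exponent $\kappa=1/((d-1)(d-2))$ (degree $d-1$!) one obtains a fixed sum of powers of $U=2^t$ as in \eqref{eq:defDeltaKappa}, and the paper invokes the Graham--Kolesnik optimisation lemma (Lemma~\ref{lem: Optim}) to choose $U$ once and for all; this single application is what produces the exponents \eqref{eq:etazetatheta}. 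So the architecture is correct and would yield the theorem, but the step you defer as ``bookkeeping'' is precisely where the six stated exponents are pinned down, and as written your proposal does not verify them.
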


Note that 
\begin{equation}\label{eq:zetaineq}
\zeta_1(3) < \zeta_2(3) \mand \zeta_1(d) > \zeta_2(d), \quad \text{for}\ d\ge 4.
\end{equation}
 Thus, for $d=3$ we can omit the term $2^{-\zeta_2(d)  r}$ in~\eqref{eq:Weyl-cong}, while for $d\ge 4$ and $\ell=1$ the term $2^{-\zeta_1(d)  r}$ can be omitted in~\eqref{eq:Weyl-cong}.

If $\alpha_d$ is of Diophantine type $\tau=1$, for given $r$
we can choose $q$ in~\eqref{eq:Approx} 
in a way that 
\begin{equation}\label{eq:qchoice}
q = \ell^{\eta_1(d)/(\eta_1(d)+\eta_2(d))}2^{\(\zeta_3(d)/(\eta_1(d)+\eta_2(d)) + o(1)\) r}
\end{equation}
(note that we do not need to multiply by a factor $\ell^{o(1)}$ because this is absorbed in $2^{o(r)}$ as we can always assume 
$\log \ell \ll r$ as otherwise the bound is trivial). Indeed, this choice of $q$ optimises the bound of Theorem~\ref{thm:Weyl-cong} and the expression in the brackets on the right hand side of~\eqref{eq:Weyl-cong} becomes
\begin{equation}\label{eq:2pwr}
\ell^{\eta_1(d)\eta_2(d)/(\eta_1(d)+\eta_2(d))}2^{-(\eta_1(d) \zeta_3(d)/(\eta_1(d)+\eta_2(d)) + o(1))r} + \ell^{\vartheta(d)}2^{-\zeta_1(d)r}  + 2^{-\zeta_2(d)r}.
\end{equation}
Thus Theorem~\ref{thm:Weyl-cong} implies the following result.

\begin{cor} 
\label{cor:Weyl-cong-Dioph-1}  For  any polynomial $f\in \R[Z]$ of degree $d \ge 3$ of the form~\eqref{eq:Poly} and 
with the leading coefficient $\alpha_d$ of Diophantine type $\tau=1$, 
we have 
\begin{equation}\label{eq:cor2bdcong}
|U_{f}(r,\ell,k,m)|   \le       2^{(1+o(1))r} \(\(\frac{\ell^{\eta_2(d)}}{2^{\zeta_3(d)r}}\)^{\eta_1(d)/(\eta_1(d)+\eta_2(d))} + \frac{\ell^{\vartheta(d)}}{2^{\zeta_1(d)r}}  + 2^{-\zeta_2(d)r}\)^{1/2}
\end{equation}
as $r\to \infty$, where $\eta_1(d), \eta_2(d), \vartheta(d),\zeta_1(d), \zeta_2(d)$ and $\zeta_3(d)$ are defined in~\eqref{eq:etazetatheta}.
\end{cor}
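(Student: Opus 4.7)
The proof is essentially the substitution argument sketched in the paragraph preceding the corollary, so my plan is to make that explicit.

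The plan is to take the bound of Theorem~\ref{thm:Weyl-cong} and optimise it in $q$. In the expression on the right-hand side of~\eqref{eq:Weyl-cong}, the only two terms that depend on $q$ are $\ell^{\eta_1(d)}q^{-\eta_1(d)}$ (decreasing in $q$) and $2^{-\zeta_3(d)r}q^{\eta_2(d)}$ (increasing in $q$). Setting them equal yields $q^{\eta_1(d)+\eta_2(d)}=\ell^{\eta_1(d)}2^{\zeta_3(d)r}$, which is exactly the choice of $q$ in~\eqref{eq:qchoice}, and at this $q$ both terms equal $(\ell^{\eta_2(d)}/2^{\zeta_3(d)r})^{\eta_1(d)/(\eta_1(d)+\eta_2(d))}$. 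So, modulo being able to realise this choice of $q$, the bracket in~\eqref{eq:Weyl-cong} collapses to the bracket in~\eqref{eq:cor2bdcong}.

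The one genuine step is thus to show that under the Diophantine type $\tau=1$ hypothesis on $\alpha_d$, there exists a coprime pair $(a,q)$ satisfying~\eqref{eq:Approx} with $q$ equal to the right-hand side of~\eqref{eq:qchoice} up to a factor $2^{o(r)}$. I would argue this via the continued fraction convergents $p_n/q_n$ of $\alpha_d$, which automatically satisfy $|\alpha_d - p_n/q_n|<1/q_n^2$ and $\gcd(p_n,q_n)=1$. The hypothesis $\tau=1$ gives $\|\alpha_d q_n\|\geq q_n^{-1-o(1)}$, and since $\|\alpha_d q_n\|\le 1/q_{n+1}$, one obtains $q_{n+1}\le q_n^{1+o(1)}$. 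Hence the convergent denominators are dense on the logarithmic scale, so for any target $Q\le 2^{O(r)}$ one can pick a convergent $q_n\in[Q,Q^{1+o(1)}]$. Applying this with $Q$ given by~\eqref{eq:qchoice} produces an admissible $q$ within the required $2^{o(r)}$ tolerance.

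Next I would justify that the $2^{o(r)}$ slack in $q$ can indeed be absorbed into the $2^{(1+o(1))r}$ prefactor of~\eqref{eq:Weyl-cong}. Any perturbation of $q$ by a factor $2^{o(r)}$ changes $q^{-\eta_1(d)}$ and $q^{\eta_2(d)}$ only by $2^{o(r)}$, which is harmless. I would also remark that we may assume $\log\ell\ll r$, since otherwise $U_f(r,\ell,k,m)$ is trivially bounded by $\#\cE_{k,m}(r)\le 2^r$, and the right-hand side of~\eqref{eq:cor2bdcong} already exceeds $2^r$; this is why the factor $\ell^{o(1)}$ need not be retained, as noted in the text after~\eqref{eq:qchoice}.

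Finally, I would substitute the chosen $q$ into~\eqref{eq:Weyl-cong}, observing that the two balanced terms produce $(\ell^{\eta_2(d)}2^{-\zeta_3(d)r})^{\eta_1(d)/(\eta_1(d)+\eta_2(d))}$ while the remaining two terms $\ell^{\vartheta(d)}2^{-\zeta_1(d)r}$ and $2^{-\zeta_2(d)r}$ carry over unchanged, giving precisely~\eqref{eq:cor2bdcong}. There is essentially no obstacle; the only nontrivial ingredient is the continued fraction realisability argument in the preceding paragraph, and even this is routine given that $\tau=1$.
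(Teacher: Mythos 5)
Your proposal is correct and follows essentially the same route as the paper, which proves the corollary simply by the substitution argument in the paragraph preceding it: choosing $q$ as in~\eqref{eq:qchoice} to balance the two $q$-dependent terms of~\eqref{eq:Weyl-cong}, so that the bracket collapses to~\eqref{eq:2pwr}. Your continued-fraction argument showing that such a $q$ is actually realisable up to a factor $2^{o(r)}$ under the hypothesis $\tau=1$ is left implicit in the paper, and your treatment of it (together with the reduction to $\log\ell\ll r$) is a sound and welcome elaboration of the same idea.
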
  

Direct calculation shows that 
$$
\frac{\eta_1(d) \zeta_3(d)}{\eta_1(d)+\eta_2(d)}> \min\{\zeta_1(d),\zeta_2(d)\}
$$
holds for $d\ge 3$. Thus for $\ell=1$ we define 
$$
U_{f}(r,k,m) = U_{f}(r,1,k,m)
$$ 
and observe that in this case the first summand in~\eqref{eq:2pwr} never dominates and, hence, Corollary~\ref{cor:Weyl-cong-Dioph-1} has the following much simpler bound for  $\ell=1$. 

\begin{cor}
\label{cor:Weyl-cong-Dioph-simpl}  For  any polynomial $f\in \R[Z]$ of degree $d \ge 3$ of the form~\eqref{eq:Poly} and 
with the leading coefficient $\alpha_d$ of Diophantine type $\tau=1$, 
we have 
\begin{equation}\label{eq:cor2bd1cong}
|U_{f}(r,k,m)|   \le  2^{(1-\xi(d)/2  +o(1))r}   
\end{equation}
as $r\to \infty$, where 
\begin{equation}\label{eq:xidef}
\xi(d)=\min\{\zeta_1(d),\zeta_2(d) \}.
\end{equation}
\end{cor}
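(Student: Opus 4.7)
The plan is to deduce Corollary~\ref{cor:Weyl-cong-Dioph-simpl} directly from Corollary~\ref{cor:Weyl-cong-Dioph-1} by specialising to $\ell=1$. With this choice, the right-hand side of~\eqref{eq:cor2bdcong} becomes
$$
2^{(1+o(1))r}\(2^{-(\eta_1(d)\zeta_3(d)/(\eta_1(d)+\eta_2(d)))r} + 2^{-\zeta_1(d)r} + 2^{-\zeta_2(d)r}\)^{1/2},
$$
so the whole claim reduces to the statement that the first of the three exponential terms is dominated (in terms of the exponent) by the minimum of the remaining two.

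Thus the main step is to verify the inequality
$$
\frac{\eta_1(d)\zeta_3(d)}{\eta_1(d)+\eta_2(d)} > \min\{\zeta_1(d),\zeta_2(d)\} = \xi(d)
$$
for all integers $d\ge 3$. Using the explicit formulas~\eqref{eq:etazetatheta}, a short computation simplifies the left-hand side to $(d-1)/(2d^2-4d+3)$, and then the two inequalities
$$
\frac{d-1}{2d^2-4d+3} > \frac{d-2}{d(d-1)} \mand \frac{d-1}{2d^2-4d+3} > \frac{1}{d^2-3d+3}
$$
can be checked by clearing denominators, leaving polynomial inequalities in $d$ that are easily seen to hold for $d\ge 3$ (and, thanks to~\eqref{eq:zetaineq}, only one of the two actually needs to be the effective bound for each range of $d$).

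Once this inequality is established, the first summand inside the bracket is of order $2^{-(\xi(d)+c)r}$ for some $c=c(d)>0$ and is therefore absorbed by $2^{o(r)}\cdot 2^{-\xi(d)r}$. The remaining two summands together contribute at most $2 \cdot 2^{-\xi(d) r}$. Substituting this into~\eqref{eq:cor2bdcong} and taking square roots yields~\eqref{eq:cor2bd1cong}. No obstacle of real substance arises; the only slightly delicate point is checking the inequality above uniformly in $d$, which is a routine computation with the six explicit rational functions in~\eqref{eq:etazetatheta}.
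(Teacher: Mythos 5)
Your argument is exactly the paper's: specialise Corollary~\ref{cor:Weyl-cong-Dioph-1} to $\ell=1$ and check that the first summand never dominates because $\eta_1(d)\zeta_3(d)/(\eta_1(d)+\eta_2(d))=(d-1)/(2d^2-4d+3)>\xi(d)$, which is the "direct calculation" the paper invokes; your simplification of the left-hand side is correct. One inaccuracy: it is not true that both of your displayed inequalities hold for all $d\ge 3$. Clearing denominators in the first gives $-d^3+6d^2-10d+6>0$, which holds at $d=3$ but fails for every $d\ge 4$ (e.g.\ $3/19<1/6=\zeta_1(4)$), while the second gives $d^3-6d^2+10d-6>0$, which fails at $d=3$ and holds for $d\ge 4$. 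Since by~\eqref{eq:zetaineq} you only need the comparison with $\min\{\zeta_1(d),\zeta_2(d)\}$ --- i.e.\ with $\zeta_1$ when $d=3$ and with $\zeta_2$ when $d\ge 4$ --- your parenthetical remark rescues the logic and the conclusion stands, but the sentence asserting that both polynomial inequalities hold for $d\ge 3$ should be corrected.
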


As mentioned above, $\xi(3)= \zeta_1(3)$ and $\xi(d)= \zeta_2(d)$ holds for $d\ge 4$.

Using Corollary~\ref{cor:Weyl-cong-Dioph-1} we  are able to prove an estimate on the discrepancy of the point set 
$$
\left\{ \{f(n)\}  : ~n \in \cE_{k,m}(r)\right\}.
$$
Our result reads as follows.
 
\begin{theorem}
\label{thm:cong-Equidistr}
For  any polynomial $f\in \R[Z]$ of degree $d \ge 3$ of the form~\eqref{eq:Poly} and with the leading coefficient $\alpha_d$ of Diophantine type $\tau=1$, 
we have 
\begin{equation}\label{eq:thequicong}
D_f(\cE_{k,m}(r))  \le  2^{-\min\{\nu_1(d), \nu_2(d)\} r + o(r)}
\end{equation} 
as $r\to \infty$, where 
$$
\nu_1(d)  = \frac{d-2}{2d^2-2d+1} \qquad\hbox{and}\qquad   \nu_2(d)  = \frac{1}{2d^2-6d+6}. 
$$
\end{theorem}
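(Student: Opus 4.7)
\textbf{Proof plan for Theorem~\ref{thm:cong-Equidistr}.} The plan is to apply the Erd\H{o}s--Tur\'an--Koksma inequality, which in the one--dimensional form gives, for every integer $H\ge 1$,
$$
D_f(\cE_{k,m}(r)) \ll \frac{1}{H} + \frac{1}{\# \cE_{k,m}(r)}\sum_{\ell=1}^{H} \frac{1}{\ell}\,\bigl|U_f(r,\ell,k,m)\bigr|,
$$
and then to insert the estimate of Corollary~\ref{cor:Weyl-cong-Dioph-1}. First one notes that $\# \cE_{k,m}(r) \asymp 2^{r}$, which follows from a short multiplicative character computation based on $\sum_{n<2^{r}} \omega^{\sigma(n)} = (1+\omega)^{r}$ for $\omega$ an $m$th root of unity. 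This reduces the task to a one--parameter optimisation in $H$.

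Using $\sqrt{A+B+C}\le \sqrt{A}+\sqrt{B}+\sqrt{C}$, the right--hand side of~\eqref{eq:cor2bdcong} expresses $|U_f(r,\ell,k,m)|$ as a sum of three pieces each of the shape $2^{(1+o(1))r}\,\ell^{a/2}\,2^{-br}$ with $a\ge 0$. Weighted summation in $\ell$, via $\sum_{\ell\le H}\ell^{a/2-1}\ll H^{a/2}$ for $a>0$ and $\ll \log H$ for $a=0$, then yields
$$
D_f(\cE_{k,m}(r)) \ll 2^{o(r)}\!\left(\frac{1}{H} + \frac{H^{\eta_2(d)\alpha(d)/2}}{2^{\zeta_3(d)\alpha(d) r/2}} + \frac{H^{\vartheta(d)/2}}{2^{\zeta_1(d) r/2}} + \frac{1}{2^{\zeta_2(d) r/2}}\right),
$$
where $\alpha(d)=\eta_1(d)/(\eta_1(d)+\eta_2(d))$.

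Writing $H=2^{hr}$, matching $1/H$ against the fourth summand above caps $h$ at $\zeta_2(d)/2 = \nu_2(d)$, whereas balancing $1/H$ against the third summand pins down $h=\zeta_1(d)/(2+\vartheta(d))$, which a direct computation with~\eqref{eq:etazetatheta} identifies with $\nu_1(d)$. Setting $h=\min\{\nu_1(d),\nu_2(d)\}$ thus makes $1/H$ together with the third and fourth summands each of order $2^{-\min\{\nu_1,\nu_2\}r + o(r)}$. The analogous balance point for the second summand is $\zeta_3(d)\alpha(d)/(2+\eta_2(d)\alpha(d)) = (d-1)/(4d^2-8d+7)$, and a brief algebraic check shows that this quantity is at least $\min\{\nu_1(d),\nu_2(d)\}$ for every $d\ge 3$. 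Consequently, at the chosen $h$ the second summand is also absorbed into the target rate, and assembling the four contributions gives~\eqref{eq:thequicong}.

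The main obstacle, such as it is, lies in this last verification, namely ruling out the second summand of~\eqref{eq:cor2bdcong} as a binding constraint. Everything else is a routine combination of standard discrepancy machinery with the bound already furnished by Corollary~\ref{cor:Weyl-cong-Dioph-1}; in particular the rates $\nu_1(d)$ and $\nu_2(d)$ appear precisely because they are the best balance points attainable against the third and fourth summands of the bracket in~\eqref{eq:cor2bdcong}.
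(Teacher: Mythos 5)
Your proposal is correct and follows essentially the same route as the paper: the Erd\H{o}s--Tur\'an--Koksma inequality, the count $\#\cE_{k,m}(r)\asymp 2^r$, insertion of Corollary~\ref{cor:Weyl-cong-Dioph-1}, and optimisation of the truncation parameter, with the same three balance exponents $\tfrac{d-1}{4d^2-8d+7}$, $\nu_1(d)$, $\nu_2(d)$ and the same final verification that the first of these never dominates. The only cosmetic difference is that the paper performs the optimisation by invoking Lemma~\ref{lem: Optim}, whereas you balance the terms directly with $H=2^{hr}$; both yield the identical bound.
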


Direct calculation shows that 
$$
\nu_1(3) < \nu_2(3) \mand \nu_1(d) > \nu_2(d), \quad \text{for}\ d\ge 4.
$$

The estimate in Theorem~\ref{thm:Weyl-cong} becomes trivial if one the summands on the right hand side of~\eqref{eq:Weyl-cong} is too large. This has the effect that it is trivial  if $q$ is small. We circumvent this problem  in the following result, which generally provides a slightly weaker estimate than the one in Theorem~\ref{thm:Weyl-cong} but instead 
 remains meaningful  for smaller $q$.

\begin{theorem}
\label{thm:Weyl-congLog}
For  any polynomial $f\in \R[Z]$ of degree $d \ge 3$ of the form~\eqref{eq:Poly} and
with the leading coefficient $\alpha_d$ satisfying~\eqref{eq:Approx}, 
we have 
\begin{equation}\label{eq:Weyl-congLog}
|U_{f}(r,\ell,k,m)|  \le       r2^{r} \(\ell^{\widetilde\eta_1(d) } q^{-\widetilde\eta_1(d) } + \ell^{\widetilde\vartheta(d)}2^{-\widetilde\zeta_1(d)  r} + 2^{-\widetilde\zeta_2(d)  r} +  2^{-\widetilde\zeta_3(d)  r} q^{\widetilde\eta_2(d) } \)^{1/2}
\end{equation}
as $r\to \infty$, where 
\begin{equation}\label{eq:tildes}
\begin{split}
&  \widetilde\eta_1(d) = \frac{1}{d^2-2d+4},  \qquad   \widetilde\eta_2(d)   = \frac{1}{d^2-2d+3},  \qquad \widetilde\vartheta(d) = \frac{1}{d^2-d+2},\\
& \widetilde\zeta_1(d)   = \frac{d-2}{d^2-d+2}, \qquad \widetilde\zeta_2(d) = \frac{1}{d^2-3d+5},  \qquad  \widetilde\zeta_3(d)   = \frac{d-1}{d^2-2d+3}. 
\end{split} 
\end{equation}
\end{theorem}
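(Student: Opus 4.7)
The plan is to follow the same skeleton as the proof of Theorem~\ref{thm:Weyl-cong}, but to substitute a different unrestricted Weyl sum bound from Section~\ref{sec:Preps} into the final step, trading a small loss in the exponents for a prefactor that is polynomial in $r$ rather than $2^{o(r)}$.

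First, I would use orthogonality of additive characters modulo $m$ to detach the digit-sum congruence, writing
\[
U_f(r,\ell,k,m) = \frac{1}{m} \sum_{j=0}^{m-1} \e(-jk/m) \sum_{n < 2^r} \e\!\left(\tfrac{j}{m}\sigma(n) + \ell f(n)\right),
\]
and then splitting $n$ along a block of binary digits, say $n = n_1 + 2^{r_1} n_2$ with $n_1 < 2^{r_1}$ and $n_2 < 2^{r-r_1}$, so that $\sigma(n) = \sigma(n_1) + \sigma(n_2)$. The character twist $\e(j\sigma(n_1)/m)$ factorises over the binary digits of $n_1$, so, following~\cite{BaCoSh} and after a Cauchy--Schwarz step, the problem reduces to bounding an unrestricted Weyl sum of the shape $\sum_{n_2}\e(h f(a + 2^{r_1} n_2))$, uniformly in the auxiliary twist $h$ and the shift $a$. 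This is exactly the point at which the results of Section~\ref{sec:Preps} are invoked.

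For Theorem~\ref{thm:Weyl-cong} one uses the sharp Vinogradov/BDG--Wooley-based estimate from Section~\ref{sec:Preps}, whose $2^{o(r)}$ loss becomes fatal when $q$ is small, since the summand $\ell^{\eta_1(d)} q^{-\eta_1(d)}$ is then close to, or larger than, $1$. For Theorem~\ref{thm:Weyl-congLog} I would instead apply the companion uniform estimate from Section~\ref{sec:Preps} in which the mean-value input is exploited with only a logarithmic loss $(\log N)^{O(1)}$, i.e.\ a factor $r^{O(1)}$ when $N=2^r$; this produces the prefactor $r\cdot 2^r$ of~\eqref{eq:Weyl-congLog}. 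The price is a shift of each of the exponents $\eta_1,\eta_2,\vartheta,\zeta_1,\zeta_2,\zeta_3$ to the corresponding tilde counterpart in~\eqref{eq:tildes}; the ``$+2$'' that appears in several of the denominators there is the arithmetic footprint of this substitution, coming from the way the modified Weyl bound interacts with the Cauchy--Schwarz step.

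The main obstacle is purely computational: one has to re-optimise the splitting parameter $r_1$ with the new Weyl input in hand, and then patiently track the four separate contributions that end up as the four summands inside the bracket of~\eqref{eq:Weyl-congLog}. Checking that the optimum delivers precisely the claimed exponents $\widetilde\eta_1(d),\widetilde\eta_2(d),\widetilde\vartheta(d),\widetilde\zeta_1(d),\widetilde\zeta_2(d),\widetilde\zeta_3(d)$ is not conceptually hard, but each one must be verified separately, because the major-arc/minor-arc balance that produced the original $\zeta_1(d), \zeta_2(d), \zeta_3(d)$ is shifted by a fixed amount by the weaker Weyl input and so the optimisation is not merely a rescaling of the earlier calculation. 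Once this bookkeeping is complete, the proof proceeds line by line in parallel with that of Theorem~\ref{thm:Weyl-cong}.
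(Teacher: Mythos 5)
Your proposal is correct and follows essentially the same route as the paper: the paper proves Theorem~\ref{thm:Weyl-congLog} by rerunning the proof of Theorem~\ref{thm:Weyl-cong} (digit splitting $n=2^ux+y$, Cauchy--Schwarz, differencing to a degree-$(d-1)$ polynomial with integer multiplier $h\ll\ell 2^{du}$) with the bound~\eqref{eq:Bound T log} of Lemma~\ref{lem:Weyl-h} substituted for~\eqref{eq:Bound T o(1)}, so that the exponent $1/((d-1)(d-2))$ becomes $1/(d^2-3d+4)$ and the same optimisation via Lemma~\ref{lem: Optim} delivers the tilde exponents, exactly as you describe. Your detection of the congruence by additive characters mod $m$ is only a cosmetic variant of the paper's direct decomposition of $\cE_{k,m}(r)$, and your slightly loose description of the post-Cauchy--Schwarz sum (it is the \emph{difference} polynomial of degree $d-1$, not $hf$ itself) does not affect the substance.
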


Analogously to~\eqref{eq:zetaineq} we have 
\begin{equation}\label{eq:tildeineq}
\widetilde\zeta_1(3) < \widetilde\zeta_2(3) \mand \widetilde\zeta_1(d) > \widetilde\zeta_2(d)\quad \text{for}\ d\ge 4.
\end{equation}
Thus for $d=3$ we can omit the term $2^{-\widetilde\zeta_2(d)  r}$ in~\eqref{eq:Weyl-congLog}, while for $d\ge 4$ and $\ell=1$ the term $2^{-\widetilde\zeta_1(d)  r}$ can be omitted in~\eqref{eq:Weyl-congLog}.

\subsection{Weyl sums twisted with  digital sequences} 
\label{sec:WS-twist} 

We start by twisting Weyl sums by the \emph{Thue--Morse sequence} 
$t_n = \((-1)^{\sigma(n)}\)_{n\in\N}$, that is, we consider
$$
\TM_f(r,\ell) = \sum_{n<2^r} t_n \e(\ell f(n)).
$$
Observing that Theorem~\ref{thm:Weyl-cong}, taken with $m=2$, immediately implies that   the sums $\TM_f(r,\ell)$ satisfy the same bound as the sums $U_f(r,\ell,k,m)$, that is
$$
|\TM_f(r,\ell)|  \le       2^{(1+o(1))r} \(\ell^{\eta_1(d)}q^{-\eta_1(d)} + \ell^{\vartheta(d)}2^{-\zeta_1(d)  r} + 2^{-\zeta_2(d)  r} +  2^{-\zeta_3(d)  r} q^{\eta_2(d) } \)^{1/2}
$$
as $r\to\infty$, with $\eta_1(d), \eta_2(d), \vartheta(d),\zeta_1(d), \zeta_2(d)$ and $\zeta_3(d)$ as in~\eqref{eq:etazetatheta}.

Next we consider  Weyl sums twisted by the  \emph{Rudin--Shapiro sequence}
$\rho_n = \((-1)^{\chi_{11}(n)}\)_{n\in\N}$
$$
\RS_f(r,\ell) = \sum_{n<2^r} \rho_n \e(\ell f(n)), 
$$
where $\chi_{11}(n)$ is the number of (possibly overlapping) occurrences of the block $11$ in the binary expansion~\eqref{eq:binrep} of $n \in \N$.

\begin{theorem}
\label{thm:Weyl-RS}
For  any polynomial $f\in \R[Z]$ of degree $d \ge 3$ of the form~\eqref{eq:Poly} and
with the leading coefficient $\alpha_d$ satisfying~\eqref{eq:Approx}, 
we have 
$$
|\RS_{f}(r,\ell)|   \le    2^{(1+o(1))r} \(\ell^{\eta_1(d)}q^{-\eta_1(d)} + \ell^{\vartheta(d)}2^{-\zeta_1(d)  r} + 2^{-\zeta_2(d)  r} +  2^{-\zeta_3(d)  r} q^{\eta_2(d) } \)^{1/2}
$$
as $r\to \infty$, where $\eta_1(d), \eta_2(d), \vartheta(d),\zeta_1(d), \zeta_2(d)$ and $\zeta_3(d)$ are defined in~\eqref{eq:etazetatheta}.
\end{theorem}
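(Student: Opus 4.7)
\textbf{Plan for the proof of Theorem \ref{thm:Weyl-RS}.} Since the Rudin--Shapiro sequence $\rho_n$ is not a function of $\sigma(n)$ alone, the bound cannot be deduced from Theorem \ref{thm:Weyl-cong} through an orthogonality identity, as in the Thue--Morse case. The plan is instead to imitate the splitting-and-Cauchy--Schwarz strategy that underlies the proof of Theorem \ref{thm:Weyl-cong} in Section \ref{sec:Thue}, replacing the multiplicativity of $\e(t\sigma(n)/m)$ under base-$2$ concatenation by a weak multiplicativity of $\chi_{11}$. Choose a splitting parameter $k$ with $1 \le k < r$ (to be optimised later) and write every $n < 2^r$ uniquely as $n = u + 2^k v$ with $u < 2^k$ and $v < 2^{r-k}$. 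Direct inspection of the binary expansions yields
\begin{equation*}
\chi_{11}(u + 2^k v) = \chi_{11}(u) + \chi_{11}(v) + a_{k-1}(u)\, a_0(v),
\end{equation*}
the last term accounting for the single block $11$ that can straddle positions $k-1$ and $k$. Hence $\rho_{u+2^k v} = \rho_u \rho_v (-1)^{a_{k-1}(u)\, a_0(v)}$.

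Applying the Cauchy--Schwarz inequality in the outer variable $u$, using $|\rho_u| = 1$, expanding the square and interchanging the order of summation then gives
\begin{equation*}
|\RS_f(r,\ell)|^2 \le 2^k \sum_{v_1,v_2 < 2^{r-k}} \rho_{v_1}\rho_{v_2} \sum_{u < 2^k} (-1)^{a_{k-1}(u)(a_0(v_1)+a_0(v_2))}\, \e\bigl(\ell [f(u+2^k v_1) - f(u+2^k v_2)]\bigr).
\end{equation*}
If $a_0(v_1) + a_0(v_2)$ is even, the sign factor equals $1$; if it is odd, it equals $(-1)^{a_{k-1}(u)}$, which is constant on each of the two halves $u < 2^{k-1}$ and $2^{k-1} \le u < 2^k$. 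In every case the inner sum in $u$ is (a difference of two) classical Weyl sum(s) on an interval of length at most $2^k$, for the polynomial of degree $d-1$ whose leading coefficient $d\alpha_d\ell \cdot 2^k(v_1-v_2)$ inherits the Diophantine approximation \eqref{eq:Approx} on $\alpha_d$. Bounding the diagonal $v_1 = v_2$ trivially and invoking the uniform degree-$(d-1)$ Weyl sum estimate of Section \ref{sec:Preps} on the off-diagonal terms supplies the key input.

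Finally, one optimises over the splitting parameter $k$ exactly as in the proof of Theorem \ref{thm:Weyl-cong}. Because the Rudin--Shapiro weights $\rho_u,\rho_v$ contribute only unit-modulus factors after Cauchy--Schwarz, the optimisation balance and the resulting exponents coincide with those found for $U_f(r,\ell,k,m)$, producing the claimed bound. The principal subtlety is the non-multiplicative boundary factor $(-1)^{a_{k-1}(u)\, a_0(v)}$: after Cauchy--Schwarz it becomes $(-1)^{a_{k-1}(u)(a_0(v_1)+a_0(v_2))}$, and the parity/half-interval case split above is what disentangles its $u$- and $v$-dependence so that the problem collapses to the same classical Weyl sum input of degree $d-1$ that drives Theorem \ref{thm:Weyl-cong}.
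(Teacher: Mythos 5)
Your digit identity $\chi_{11}(u+2^kv)=\chi_{11}(u)+\chi_{11}(v)+a_{k-1}(u)\,a_0(v)$ and the parity/half-interval device for decoupling the boundary sign are both correct and close in spirit to what the paper does (the paper first rewrites $\RS_f$ as twice the sum over $\{n:\chi_{11}(n)\equiv 0\pmod 2\}$ minus the full Weyl sum, and then fixes the two digits adjacent to the cut instead of your parity split). The genuine problem is the \emph{orientation} of your decomposition. You apply Cauchy--Schwarz over the low block $u<2^k$ and difference over the high block, so the inner exponential sum is $\sum_{u<2^k}\e\bigl(\ell[f(u+2^kv_1)-f(u+2^kv_2)]\bigr)$: a degree-$(d-1)$ Weyl sum of length $N=2^k$ whose leading coefficient is $h\alpha_d$ with $h=d\ell\,2^k(v_1-v_2)$. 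Since $|v_1-v_2|$ can be as large as $2^{r-k}$, you get $h\asymp\ell 2^r$ \emph{independently of the splitting parameter}, and the term $hq^{-1}$ in the quantity $\Delta$ of Lemma~\ref{lem:Weyl-h} is $\asymp\ell 2^rq^{-1}$. This injects a summand of order $(\ell 2^rq^{-1})^{\kappa}$, $\kappa=1/((d-1)(d-2))$, into your final bracket; no choice of $k$ reduces it, and it exceeds $1$ whenever $q\le\ell 2^r$ --- precisely the range where the theorem is meant to be nontrivial (cf.~\eqref{eq:qchoice}). Hence your optimisation cannot ``coincide with that for $U_f$'': the exponent $\eta_1(d)=1/(d^2-2d+2)$ arises from balancing $(\ell 2^{du}q^{-1})^{\kappa}$ against $2^{-u}$, which requires the multiplier to be $\ll\ell 2^{du}$, i.e.\ controlled by the free parameter.

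The repair is to transpose the splitting, exactly as in the proof of Theorem~\ref{thm:Weyl-cong}: write $n=2^ux+y$ with $y<2^u$ the low block, apply Cauchy--Schwarz over the \emph{high} variable $x<2^{r-u}$, and difference over $y,z<2^u$. Then $f(2^ux+y)-f(2^ux+z)$, as a polynomial in $x$, has degree $d-1$ and leading coefficient $\ell d\,2^{(d-1)u}(y-z)\alpha_d$, so the multiplier satisfies $h\ll\ell 2^{du}$ and the Weyl sum has the long length $2^{r-u}$; this reproduces~\eqref{eq:thm2startcong} and therefore the stated exponents~\eqref{eq:etazetatheta}. Your boundary analysis carries over to this orientation: the straddling block becomes $a_{u-1}(y)\,a_0(x)$, and conditioning on the two digits at the cut (the paper's $c_0,c_1$) makes the cross term constant before Cauchy--Schwarz is applied.
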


To show the power and agility of our approach, next we consider the sums
$$
W_f(r,\ell)  = \sum_{n<2^r} t_n t_{n+1}  \e(\ell f(n)).
$$
with double twist by the Thue--Morse sequence. 

\begin{theorem}
\label{thm:Weyl-sigmasigma}
For  any polynomial $f\in \R[Z]$ of degree $d \ge 3$ of the form~\eqref{eq:Poly} and
with the leading coefficient $\alpha_d$ satisfying~\eqref{eq:Approx}, 
we have 
$$
|W_f(r,\ell) |    \le     2^{(1+o(1))r} \(\ell^{\eta_1(d)}q^{-\eta_1(d)} + \ell^{\vartheta(d)}2^{-\zeta_1(d)  r} + 2^{-\zeta_2(d)  r} +  2^{-\zeta_3(d)  r} q^{\eta_2(d) } \)^{1/2}
$$
as $r\to \infty$, where $\eta_1(d), \eta_2(d), \vartheta(d),\zeta_1(d), \zeta_2(d)$ and $\zeta_3(d)$ are defined in~\eqref{eq:etazetatheta}.\end{theorem}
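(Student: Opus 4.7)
The proof would follow the same overall strategy as the proofs of Theorems~\ref{thm:Weyl-cong} and~\ref{thm:Weyl-RS}, with an arguably simpler structural reduction than in the case of $U_f(r,\ell,k,m)$. The starting point is the identity
$$
t_n t_{n+1} = (-1)^{\sigma(n) + \sigma(n+1)} = (-1)^{k+1},
$$
where $k$ denotes the number of trailing binary $1$s in $n$. Indeed, adding $1$ to $n$ turns those $k$ ones into zeros and promotes the next zero to a one, so $\sigma(n+1) = \sigma(n) - k + 1$. Consequently, $t_n t_{n+1}$ depends only on the low-order binary digits of $n$, as long as passing from $n$ to $n+1$ does not propagate a carry beyond them.

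To exploit this, I would decompose each $n < 2^r$ as $n = 2^s m + j$ with $0 \le j < 2^s$ and $0 \le m < 2^{r-s}$, where $s$ is a parameter to be optimized later. For $j < 2^s - 1$, no carry propagates past the bottom $s$ bits on incrementing $n$, so $t_n t_{n+1} = t_j t_{j+1}$ depends only on $j$. The residue class $j = 2^s - 1$ contributes at most $O(2^{r-s})$ in absolute value and is absorbed into the error. This gives
$$
W_f(r, \ell) = \sum_{j = 0}^{2^s - 2} t_j t_{j+1} \sum_{m < 2^{r-s}} \e(\ell f(2^s m + j)) + O(2^{r-s}).
$$
The inner sum over $m$ is a classical Weyl sum of length $N = 2^{r-s}$ for the polynomial $\ell f(2^s X + j)$ of degree $d$ with leading coefficient $\ell \alpha_d 2^{sd}$. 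To this one applies the uniform Weyl sum estimate of Section~\ref{sec:Preps} (which rests on the Vinogradov mean value theorem of~\cite{BDG, Wool1, Wool2}), possibly preceded by a Cauchy--Schwarz in $j$ (as in the proofs of Theorems~\ref{thm:Weyl-cong} and~\ref{thm:Weyl-RS}) which converts the outer sum over $j$ into an average over pairs $(m_1, m_2)$ and reduces the relevant Weyl sum to degree $d-1$ with leading coefficient proportional to $\alpha_d 2^s (m_1 - m_2)$. Summing the resulting estimate over the auxiliary variables and balancing $s$ against the boundary error $O(2^{r-s})$ yields exactly the exponents $\eta_1, \eta_2, \vartheta, \zeta_1, \zeta_2, \zeta_3$ defined in~\eqref{eq:etazetatheta}, hence the four-term bound asserted in the theorem.

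The main obstacle is not the structural reduction itself (which is rather simpler than for $U_f(r,\ell,k,m)$, since no Fourier expansion of a digit-sum congruence is required), but rather the careful bookkeeping of the Dirichlet approximation~\eqref{eq:Approx} of $\alpha_d$ under the affine substitution $X \mapsto 2^s X + j$: the transformed leading coefficient $\ell \alpha_d 2^{sd}$ has a rational approximation whose denominator is related to $q$ only up to the factor $\gcd(q, \ell 2^{sd})$, and after the Cauchy--Schwarz step this has to be combined with the averaging over $m_1 - m_2$ to produce a usable Diophantine input to each classical Weyl sum. This bookkeeping, and the final optimization in $s$, is the technical heart of the proof of Theorem~\ref{thm:Weyl-cong}, and it reappears here verbatim; it is precisely the step from which the exponents of~\eqref{eq:etazetatheta} emerge.
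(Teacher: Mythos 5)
Your structural reduction is sound and matches the paper's: writing $n=2^sm+j$, the weight $t_nt_{n+1}=(-1)^{\sigma(n)+\sigma(n+1)}$ depends only on the low part $j$ unless $j=2^s-1$, and that exceptional class contributes $O(2^{r-s})$ (the paper does exactly this, phrased via the set $\cF(r)$ and the exclusion of $n\equiv-1\pmod{2^u}$, with $u$ playing the role of your $s$). The gap is in the Cauchy--Schwarz step, which you have set up in the wrong variable. In the proofs of Theorems~\ref{thm:Weyl-cong} and~\ref{thm:Weyl-RS} the Cauchy--Schwarz is applied over the \emph{long} variable (your $m$), so the square produces pairs $(j_1,j_2)$ of \emph{low} parts, and the surviving inner Weyl sum is $\sum_{m<2^{r-s}}\e\left(F_{j_1,j_2}(m)\right)$ with $F_{j_1,j_2}$ of degree $d-1$ and, by~\eqref{eq:Fform}, integer multiplier $h=\ell d\,2^{(d-1)s}(j_1-j_2)\ll\ell 2^{ds}$ --- a quantity controlled by the free parameter $s$. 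That control is exactly what makes the term $hq^{-1}$ in Lemma~\ref{lem:Weyl-h} beatable and, after balancing against the diagonal term $2^{-s}$ via Lemma~\ref{lem: Optim}, produces $\ell^{\eta_1}q^{-\eta_1}$ with $\eta_1=\kappa/(d\kappa+1)$, $\kappa=1/((d-1)(d-2))$. Your version (Cauchy--Schwarz in $j$, pairs $(m_1,m_2)$, leading coefficient proportional to $\alpha_d2^s(m_1-m_2)$) instead yields a Weyl sum over the short variable $j$ with multiplier $h\asymp\ell 2^s|m_1-m_2|$, which for a positive proportion of pairs is of order $\ell 2^r$ \emph{independently of the choice of $s$}. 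Lemma~\ref{lem:Weyl-h} then has $\Delta\ge hq^{-1}\gg\ell2^rq^{-1}\ge1$ for all $q\le\ell 2^r$, i.e.\ it is trivial throughout that range, and even for larger $q$ it can only deliver a term of the shape $(\ell 2^r/q)^{c}$, never $\ell^{\eta_1}q^{-\eta_1}$; taking $q=2^r$ and $\ell=1$ shows your route provably cannot reach the stated bound.

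Two further points. Your fallback of estimating the inner sum over $m$ directly (degree $d$, multiplier $\ell2^{sd}$) invokes the exponent $1/d(d-1)$ rather than $\kappa=1/((d-1)(d-2))$, whereas the exponents in~\eqref{eq:etazetatheta} are precisely the quantities $\kappa/(d\kappa+1)$, $\kappa/(\kappa+1)$, etc.\ arising from the differenced degree-$(d-1)$ sum; so neither of your two options ``yields exactly the exponents'' as asserted. And the optimisation is not a balance against the boundary error $O(2^{r-s})$: that error contributes $2^{-2s}$ after normalising $|W_f|^2$ by $2^{2r}$ and is always dominated by the Cauchy--Schwarz diagonal $2^{-s}$ (the term $U^{-1}$ in~\eqref{eq:defDeltaKappa}), which is the term actually balanced. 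With the Cauchy--Schwarz transposed to the $m$-variable, your argument becomes the paper's proof, which reduces $W_f$ to the sum $V_f$ over $\{n<2^r:\sigma(n)+\sigma(n+1)\equiv0\pmod 2\}$ and then repeats the proof of Theorem~\ref{thm:Weyl-cong} from~\eqref{eq:thm2startcong} onwards.
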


Note that all the variants of Theorem~\ref{eq:Weyl-cong} stated in Section~\ref{sec:cong} can be established for the results of the present section by similar arguments.

\subsection{Weyl sums over integers with a fixed sum of digits} 

Our first main result on Weyl sums over sets with fixed binary sum of digits reads as follows.

\begin{theorem}
\label{thm:Weyl-sparse}
For  any polynomial $f\in \R[Z]$ of degree $d \ge 3$ of the form~\eqref{eq:Poly} and
with the leading coefficient $\alpha_d$ satisfying~\eqref{eq:Approx}, 
we have 
\begin{equation}\label{eq:Weyl-sparse}
\begin{split} 
|S_{f}(r, \ell, s)|   \le       2^{(1/2+o(1))r} & \binom{r}{s}^{1/2} \\
&\(\ell^{\eta_1(d)}q^{-\eta_1(d)} + \ell^{\vartheta(d)}2^{-\zeta_1(d)  r} + 2^{-\zeta_2(d)  r} +  2^{-\zeta_3(d)  r} q^{\eta_2(d) } \)^{1/2}
\end{split}
\end{equation}
as $r\to \infty$, where $\eta_1(d), \eta_2(d), \vartheta(d),\zeta_1(d), \zeta_2(d)$ and $\zeta_3(d)$ are defined in~\eqref{eq:etazetatheta}.
\end{theorem}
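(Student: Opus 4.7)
The plan is to mirror the proof of Theorem~\ref{thm:Weyl-cong}, replacing the congruence-class indicator by the exact-value indicator of the digit sum. Note that the bracket $(\ldots)^{1/2}$ appearing in~\eqref{eq:Weyl-sparse} is identical to the one in~\eqref{eq:Weyl-cong}, which strongly suggests that the same Weyl-sum mechanism---the uniform estimates from Section~\ref{sec:Preps}, based on the Vinogradov mean value theorem in the form of~\cite{BDG, Wool1, Wool2}---should drive the final bound. The essentially new ingredient is the factor $\binom{r}{s}^{1/2}$, which I would produce via a Cauchy-Schwarz step exploiting the sparsity of $\cG_s(r)$ inside $[0,2^r)$ of density $\binom{r}{s}/2^r$.

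First I would rewrite the constraint $\sigma(n)=s$ through a discrete Fourier expansion modulo $r+1$. Since $\sigma(n)\in\{0,1,\ldots,r\}$ for every $n<2^r$, the exact-value indicator admits the clean identity
\begin{equation*}
\mathbf{1}[\sigma(n)=s]=\frac{1}{r+1}\sum_{k=0}^{r}\e\!\left(\frac{k(\sigma(n)-s)}{r+1}\right),
\end{equation*}
expressing $S_f(r,\ell,s)$ as an average of twisted Weyl sums
\begin{equation*}
T_f(r,\ell,\theta)=\sum_{n<2^r}\e(\theta\sigma(n))\,\e(\ell f(n))
\end{equation*}
evaluated at $\theta=k/(r+1)$. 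Following the approach of~\cite{BaCoSh}, I would then decompose the digit-sum twist as a product $\e(\theta\sigma(n))=\prod_i \e(\theta a_i(n))$ over the binary digits, and combine Cauchy-Schwarz over the $k$-sum with a Parseval-type identity to reduce the estimation to unrestricted Weyl sums of $f$ over blocks of digits. These Weyl sums are bounded by the uniform estimate of Section~\ref{sec:Preps}, producing the bracket in~\eqref{eq:Weyl-sparse}. The prefactor $2^{r/2}\binom{r}{s}^{1/2}$ already emerges from the basic Cauchy-Schwarz
\begin{equation*}
|S_f(r,\ell,s)|^2=\left|\sum_{n<2^r}\mathbf{1}_{\cG_s(r)}(n)\,\e(\ell f(n))\right|^2\le\binom{r}{s}\cdot 2^r,
\end{equation*}
so the task is to interpolate this trivial inequality with the Weyl savings so that the final product exactly matches~\eqref{eq:Weyl-sparse}.

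The hard part will be the combinatorial bookkeeping in this BaCoSh-style digit reduction so that (i) the resulting Weyl sum retains the original leading coefficient $\alpha_d$, ensuring that the bracket in~\eqref{eq:Weyl-sparse} is governed by the degree-$d$ exponents $\eta_i(d), \zeta_i(d), \vartheta(d)$ rather than by any lower degree that might appear after digit-splitting differences, and (ii) the combinatorial count of digit configurations in $\cG_s(r)$ contributes exactly the factor $\binom{r}{s}^{1/2}$ with no spurious polynomial-in-$r$ loss that would spoil the $o(r)$ exponent. A secondary delicate point is the boundary term $k=0$ in the DFT, where $T_f(r,\ell,0)$ is the untwisted unrestricted Weyl sum and must be bounded directly by the Section~\ref{sec:Preps} estimate without the additional multiplicative digital weight that handles the non-principal frequencies.
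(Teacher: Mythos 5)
There is a genuine gap in your route, and it sits exactly where you flagged ``the hard part''. The paper does not Fourier-expand the indicator $\mathbf{1}[\sigma(n)=s]$ at all. Instead it splits the digits into a high and a low block: with $X=2^{r-u}$ one writes $n=2^ux+y$ and uses the exact decomposition of $\Grs$ into pairs $(x,y)$ with $x\in\cG_{s-j}(r-u)$, $y\in\cG_j(u)$, so that
$$
S_f(r,\ell,s)=\sum_{j=0}^{s}\ \sum_{x\in\cG_{s-j}(r-u)}\ \sum_{y\in\cG_j(u)}\e\(\ell f(2^ux+y)\).
$$
Two applications of the Cauchy inequality (over $j$, then over $x$) produce the weights $\binom{r-u}{s-j}$ and $\binom{u}{j}$, and Vandermonde's identity $\sum_j\binom{r-u}{s-j}\binom{u}{j}=\binom{r}{s}$ is what yields the factor $\binom{r}{s}^{1/2}$ exactly, with no polynomial loss. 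The off-diagonal terms are Weyl sums in $x$ for the differenced polynomial $F_{y,z}(Z)=\ell f(2^uZ+y)-\ell f(2^uZ+z)$, which has degree $d-1$ and leading coefficient $\ell d\,2^{(d-1)u}(y-z)\alpha_d$; this is why Lemma~\ref{lem:Weyl-h}, uniform in the multiplier $h\ll\ell 2^{du}$, is needed. Your concern (i) is therefore resolved differently from what you expect: the degree genuinely drops to $d-1$ after differencing, and the exponents in~\eqref{eq:etazetatheta} arise from the degree-$(d-1)$ bound combined with the optimization of $u$ via Lemma~\ref{lem: Optim}.

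Your DFT route cannot reproduce the factor $\binom{r}{s}^{1/2}$. Writing $S_f(r,\ell,s)$ as $\frac{1}{r+1}\sum_k\e(-ks/(r+1))\,T_f(r,\ell,k/(r+1))$ and then estimating the twisted sums $T_f$ in absolute value destroys the oscillation in $k$ that localizes the digit sum at $s$: one is left with $|S_f|\le\max_\theta|T_f(r,\ell,\theta)|$, and at $\theta=0$ this is the full unrestricted Weyl sum, of size roughly $2^{r}$ times a power saving, which is far larger than $2^{r/2}\binom{r}{s}^{1/2}$ times the bracket when $s$ is away from $r/2$. To rescue the argument you would need frequency-dependent decay of $|T_f(r,\ell,\theta_k)|$ of Gelfond type, roughly $|2\cos\pi\theta_k|^{r}$, surviving the twist by $\e(\ell f(n))$ --- a substantially harder statement that neither appears in the paper nor follows from its lemmas; and even then, taking absolute values before summing over $k$ would produce a central binomial coefficient rather than $\binom{r}{s}$. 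Finally, your observation that Cauchy--Schwarz gives $|S_f|^2\le\binom{r}{s}2^r$ is not a step forward: since $\binom{r}{s}\le 2^r$, that bound is always weaker than the trivial bound $|S_f|\le\binom{r}{s}$, so the prefactor must be extracted jointly with the Weyl saving, which is precisely what the Vandermonde bookkeeping above accomplishes.
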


Analogously to Theorem~\ref{thm:Weyl-cong}, by~\eqref{eq:zetaineq} we can omit the term $2^{-\zeta_2(d)  r}$ in~\eqref{eq:Weyl-sparse} for $d=3$, while for $d\ge 4$ and $\ell=1$ the term $2^{-\zeta_1(d)  r}$ can be omitted in~\eqref{eq:Weyl-sparse}.

As in Section~\ref{sec:cong} we further study the case where $\alpha_d$ is of Diophantine type $\tau=1$. As above, for given $r$ we can choose $q$ (given as in~\eqref{eq:Approx}) as in~\eqref{eq:qchoice} which optimises the bound of Theorem~\ref{thm:Weyl-sparse}. For this choice of $q$ the expression in the brackets on the right hand side of~\eqref{eq:Weyl-sparse} is given by~\eqref{eq:2pwr}. Thus Theorem~\ref{thm:Weyl-sparse} implies the following result.

\begin{cor} 
\label{cor:Weyl-sparse-Dioph-1}  For  any polynomial $f\in \R[Z]$ of degree $d \ge 3$ of the form~\eqref{eq:Poly} and 
with the leading coefficient $\alpha_d$ of Diophantine type $\tau=1$, 
we have 
\begin{equation}\label{eq:cor2bd}
|S_{f}(r, \ell, s)|   \le       2^{(1/2+o(1))r}   \binom{r}{s}^{1/2} \(\(\frac{\ell^{\eta_2(d)}}{2^{\zeta_3(d)r}}\)^{\eta_1(d)/(\eta_1(d)+\eta_2(d))} + \frac{\ell^{\vartheta(d)}}{2^{\zeta_1(d)r}}  + 2^{-\zeta_2(d)r}\)^{1/2}
\end{equation}
as $r\to \infty$, where $\eta_1(d), \eta_2(d), \vartheta(d),\zeta_1(d), \zeta_2(d)$ and $\zeta_3(d)$ are defined in~\eqref{eq:etazetatheta}.
\end{cor}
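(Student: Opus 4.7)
The plan is to derive Corollary~\ref{cor:Weyl-sparse-Dioph-1} as an immediate specialization of Theorem~\ref{thm:Weyl-sparse}, obtained by balancing the first and fourth summands inside the brackets in~\eqref{eq:Weyl-sparse} via an appropriate choice of the denominator $q$ in Dirichlet's approximation~\eqref{eq:Approx}. The remaining two summands do not depend on $q$ and will be carried through unchanged.

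The first step is to exploit the Diophantine type $\tau = 1$ hypothesis on $\alpha_d$ to gain freedom in selecting $q$. Type $\tau = 1$ implies that the partial quotients of the continued fraction expansion of $\alpha_d$ are eventually bounded by $q^{\varepsilon}$ for every $\varepsilon > 0$, so the sequence of convergent denominators has no large multiplicative gaps. Hence for every sufficiently large prescribed $Q$ we can find an admissible pair $(a, q)$ satisfying~\eqref{eq:Approx} with $Q \le q \le Q^{1 + o(1)}$. We may additionally assume $\log \ell \ll r$, since otherwise~\eqref{eq:cor2bd} is trivial.

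The second step is the optimization itself. Solving $\ell^{\eta_1(d)} q^{-\eta_1(d)} = 2^{-\zeta_3(d) r} q^{\eta_2(d)}$ for $q$ gives exactly the choice~\eqref{eq:qchoice}, and a direct computation shows that at this $q$ both of these summands simplify to
$$
\(\frac{\ell^{\eta_2(d)}}{2^{\zeta_3(d) r}}\)^{\eta_1(d)/(\eta_1(d)+\eta_2(d))}.
$$
The $2^{o(r)}$ slack in the size of the admissible $q$, together with the slack in the prefactor $2^{(1/2+o(1))r}$ of Theorem~\ref{thm:Weyl-sparse}, absorbs any discrepancy $\ell^{o(1)} = 2^{o(r)}$ arising during the balancing. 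Inserting this into~\eqref{eq:Weyl-sparse} and adding the two unaffected terms $\ell^{\vartheta(d)} 2^{-\zeta_1(d) r}$ and $2^{-\zeta_2(d) r}$ yields~\eqref{eq:cor2bd}.

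The only mildly delicate point, and the one I would write out most carefully, is the assertion that for a $\tau = 1$ number an approximating denominator of essentially any prescribed size is available; this is precisely the remark following~\eqref{eq:Approx} in the paper and follows from the well-spacedness of the $q$ arising from continued fractions with subpolynomial partial quotients. Everything else reduces to substitution and bookkeeping with the exponents in~\eqref{eq:etazetatheta}.
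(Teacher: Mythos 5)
Your proposal is correct and follows exactly the paper's route: the authors likewise deduce the corollary from Theorem~\ref{thm:Weyl-sparse} by choosing $q$ as in~\eqref{eq:qchoice} to equalize the two $q$-dependent summands (which is possible because the Diophantine type $\tau=1$ hypothesis makes the admissible denominators well-spaced, so one of size $2^{(c+o(1))r}$ exists), yielding the bracket~\eqref{eq:2pwr}. Your explicit verification that both balanced terms reduce to $\bigl(\ell^{\eta_2(d)}2^{-\zeta_3(d)r}\bigr)^{\eta_1(d)/(\eta_1(d)+\eta_2(d))}$ and your handling of the $\ell^{o(1)}$ slack match the paper's (terser) presentation.
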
  

For $\ell=1$ we define 
$$
S_{f}(r,s) = S_{f}(r,1,s)
$$ 
and observe that by the same argument as in Section~\ref{sec:cong}, Corollary~\ref{cor:Weyl-sparse-Dioph-1} has the following much simpler bound for  $\ell=1$. 

\begin{cor}
\label{cor:Weyl-sparse-Dioph-simpl}  For  any polynomial $f\in \R[Z]$ of degree $d \ge 3$ of the form~\eqref{eq:Poly} and 
with the leading coefficient $\alpha_d$ of Diophantine type $\tau=1$, 
we have 
\begin{equation}\label{eq:cor2bd1}
|S_{f}(r,s)|   \le    \binom{r}{s}^{1/2}   2^{(1-\xi(d)  +o(1))r/2}   
\end{equation}
as $r\to \infty$, where $\xi(d)$ is defined in~\eqref{eq:xidef}.
\end{cor}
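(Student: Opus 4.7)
The plan is to derive the stated bound directly from Corollary~\ref{cor:Weyl-sparse-Dioph-1} by specialising to $\ell = 1$, repeating verbatim the reduction that was used to obtain Corollary~\ref{cor:Weyl-cong-Dioph-simpl} from Corollary~\ref{cor:Weyl-cong-Dioph-1}. First I would substitute $\ell = 1$ into~\eqref{eq:cor2bd}, so that the expression inside the brackets becomes
$$
2^{-\eta_1(d)\zeta_3(d)r/(\eta_1(d)+\eta_2(d))} \;+\; 2^{-\zeta_1(d)r} \;+\; 2^{-\zeta_2(d)r}.
$$

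The crux is then the pointwise inequality
$$
\frac{\eta_1(d)\,\zeta_3(d)}{\eta_1(d)+\eta_2(d)} \;>\; \min\{\zeta_1(d),\zeta_2(d)\} \;=\; \xi(d)
$$
for every integer $d \ge 3$, which was already stated (without proof, as a direct calculation) in the paragraph preceding Corollary~\ref{cor:Weyl-cong-Dioph-simpl}. Granting this inequality, the first summand is dominated by whichever of $2^{-\zeta_1(d)r}$ or $2^{-\zeta_2(d)r}$ is larger, so the whole bracket is at most $2^{-\xi(d)r + o(r)}$. Taking the square root gives $2^{-\xi(d)r/2 + o(r)}$, and multiplying by the factor $\binom{r}{s}^{1/2} 2^{(1/2+o(1))r}$ in front of the bracket in~\eqref{eq:cor2bd} yields exactly~\eqref{eq:cor2bd1}.

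The only genuine work is the verification of the inequality $\eta_1(d)\zeta_3(d)/(\eta_1(d)+\eta_2(d)) > \xi(d)$. Using the explicit formulas~\eqref{eq:etazetatheta}, one computes
$$
\frac{\eta_1(d)\,\zeta_3(d)}{\eta_1(d)+\eta_2(d)} = \frac{(d-1)^2/(d-1)}{(d-1)^2 + (d^2-2d+2)} \cdot \frac{1}{1} = \frac{d-1}{2d^2 - 4d + 3},
$$
after clearing denominators in the sum $\eta_1(d)+\eta_2(d) = 1/(d^2-2d+2) + 1/(d-1)^2$. This quantity then has to be compared with $\zeta_1(3) = 1/6$ when $d=3$ and with $\zeta_2(d) = 1/(d^2-3d+3)$ when $d \ge 4$; in each case the inequality reduces to a polynomial inequality in $d$ that can be checked by a short direct calculation.

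The main (and essentially only) obstacle is this bookkeeping with the exponents, since no new analytic input is required beyond Corollary~\ref{cor:Weyl-sparse-Dioph-1}. I would therefore write the proof as a one-line appeal to the corollary followed by the explicit verification of the exponent comparison, noting that the argument is identical to the one already used in Section~\ref{sec:cong} for the passage from Corollary~\ref{cor:Weyl-cong-Dioph-1} to Corollary~\ref{cor:Weyl-cong-Dioph-simpl}.
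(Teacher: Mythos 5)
Your proposal is correct and follows exactly the paper's route: the paper also obtains this corollary by setting $\ell=1$ in Corollary~\ref{cor:Weyl-sparse-Dioph-1} and invoking the same exponent comparison $\eta_1(d)\zeta_3(d)/(\eta_1(d)+\eta_2(d)) = (d-1)/(2d^2-4d+3) > \xi(d)$ already recorded in Section~\ref{sec:cong}, so that the first summand in the bracket never dominates. Your verification of that inequality (which the paper leaves as a ``direct calculation'') is accurate, so nothing is missing.
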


By~\eqref{eq:zetaineq}, $\xi(3)= \zeta_1(3)$ and $\xi(d)= \zeta_2(d)$ holds for $d\ge 4$.

Contrary to the bound~\eqref{eq:cor2bd1cong} in Corollary~\ref{cor:Weyl-cong-Dioph-simpl},
due to the additional parameter $s$ it is {\it a priori} not clear when the bound~\eqref{eq:cor2bd1} is smaller than the trivial bound $\# \Grs$. In order to clarify this, recall that 
\begin{equation}\label{eq:GHrs}
\# \Grs=\binom{r}{s} = 2^{rH(s/r)+ o(r)},
\end{equation}
where 
\begin{equation}\label{eq:Hrs}
H(\gamma) = \frac{- \gamma \log \gamma  -  (1-\gamma) 
\log (1-\gamma)}{\log 2}  
\end{equation}
denotes the {\it binary entropy function\/}, see, for example,~\cite[Section~10.11]{MS}.
Since $H(\gamma)  \to 1$ as $\gamma\to 1/2$ we see that under the conditions of 
Corollary~\ref{cor:Weyl-sparse-Dioph-1} we have a nontrivial bound for sums over integers with rather sparse binary representations.

\begin{cor}
\label{cor:Weyl-sparse-Dioph-2} 
For any $d\ge 3$ there are constants $\rho(d) <1/2$ and $\omega(d)  > 0$ depending only on $d$ such that
for  any polynomial $f\in \R[Z]$ of degree $d \ge 3$ of the form~\eqref{eq:Poly}   and 
with the leading coefficient $\alpha_d$ of Diophantine type $\tau=1$, 
for $s/r \in  [\rho(d), 1/2]$, 
we have 
\begin{equation}\label{eq:rhoomega}
|S_{f}(r,s)|   \le      \binom{r}{s}^{1-\omega(d)}
\end{equation}
provided that $r$ is large enough. 
\end{cor}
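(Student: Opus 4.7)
The plan is to derive the stated bound directly from Corollary~\ref{cor:Weyl-sparse-Dioph-simpl} together with the entropy estimate~\eqref{eq:GHrs}, by comparing exponents and then exploiting the continuity of the binary entropy function $H$ at $1/2$.

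First, I would take logarithms (base $2$) of the conclusion of Corollary~\ref{cor:Weyl-sparse-Dioph-simpl} and of the desired inequality. Writing $\gamma=s/r$, the corollary gives
$$
\log_2|S_f(r,s)| \le \tfrac12 r H(\gamma) + \tfrac12(1-\xi(d))r + o(r),
$$
while the desired bound $|S_f(r,s)|\le \binom{r}{s}^{1-\omega(d)}$ is equivalent, via~\eqref{eq:GHrs}, to
$$
\log_2|S_f(r,s)| \le (1-\omega(d)) r H(\gamma) + o(r).
$$
So it suffices to arrange that
$$
\tfrac12 H(\gamma) + \tfrac12(1-\xi(d)) \le (1-\omega(d)) H(\gamma),
$$
that is, $H(\gamma) \ge (1-\xi(d))/(1-2\omega(d))$, with a margin sufficient to absorb the $o(r)$ terms on both sides.

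Next I would fix $\omega(d)$. Since $\xi(d)>0$ for $d\ge 3$ by~\eqref{eq:xidef} and~\eqref{eq:etazetatheta}, the choice $\omega(d) = \xi(d)/4$ makes the required threshold
$$
\theta(d) := \frac{1-\xi(d)}{1-2\omega(d)} = \frac{1-\xi(d)}{1-\xi(d)/2}
$$
strictly smaller than $1$. Because $H$ is continuous on $(0,1)$, strictly increasing on $(0,1/2]$, and satisfies $H(1/2)=1$, there exists a unique $\rho(d)\in(0,1/2)$ with $H(\rho(d))=\tfrac12\bigl(1+\theta(d)\bigr)$, so that $H(\gamma) \ge \tfrac12(1+\theta(d)) > \theta(d)$ for every $\gamma\in[\rho(d),1/2]$. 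This strict inequality provides a positive gap which dominates the $o(r)$ contributions once $r$ is sufficiently large.

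Finally, combining these two steps, for $s/r\in[\rho(d),1/2]$ and $r$ large enough, Corollary~\ref{cor:Weyl-sparse-Dioph-simpl} yields~\eqref{eq:rhoomega} with $\omega(d)=\xi(d)/4$ and $\rho(d)$ defined above. The only delicate point is ensuring that the uniformity of the $o(r)$ term is unaffected by the range of $s$; since the bound of Corollary~\ref{cor:Weyl-sparse-Dioph-simpl} is independent of $s$, and the entropy approximation $\binom{r}{s}=2^{rH(s/r)+o(r)}$ in~\eqref{eq:GHrs} is uniform for $s/r$ bounded away from $0$ and $1$ (which is guaranteed on $[\rho(d),1/2]$), the required $o(r)$ corrections are controlled uniformly in $s$. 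This uniformity, rather than the underlying exponential comparison, is the subtle point; once it is secured, the conclusion follows.
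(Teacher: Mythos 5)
Your argument is correct and is essentially the fleshed-out version of the paper's own (only sketched) justification: the paper likewise derives the corollary by combining the bound~\eqref{eq:cor2bd1} of Corollary~\ref{cor:Weyl-sparse-Dioph-simpl} with the entropy estimates~\eqref{eq:GHrs} and~\eqref{eq:Hrs}, reducing everything to the threshold condition $1-\xi(d)<H(\rho(d))$ stated in~\eqref{eq:xiHbound}. Your explicit choice $\omega(d)=\xi(d)/4$ and the continuity/monotonicity argument for $H$ near $1/2$ correctly supply the quantitative gap needed to absorb the $o(r)$ terms, so nothing is missing.
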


In view of~\eqref{eq:cor2bd1}, \eqref{eq:GHrs} and~\eqref{eq:Hrs} we get a nontrivial bound 
in~\eqref{eq:rhoomega} for large $r$ if and only if 
\begin{equation}\label{eq:xiHbound}
1-\xi(d) < H(\rho(d))
\end{equation}
holds. We calculated threshold values for $\rho(d)$ for $d\in\{3,\ldots, 10\}$. In particular, if $\rho(d) \ge \rho_0(d)$ with $\rho_0(d)$ given in Table~\ref{tab:rho}, 
then the quantity $\omega(d)$ in~\eqref{eq:rhoomega} can be chosen positively.

 \newcommand{\fontsmall}{\fontsize{11pt}{12pt}\selectfont}
\begin{table}[H]
\begin{minipage}[c]{6.2in}
\fontsmall
\begin{tabular}{|c||c|c|c|c|c|c|c|c|}
\hline
$d$ & $3$ & $4$& $5$& $6$& $7$& $8$& $9$& $10$ \\
\hline
$\rho_0(d)$ & $0.264414$ & $0.281247$ & $0.338192$ & $0.372247$ & $0.394662$ & $0.410466$ & $0.422184$ & $0.431208$ \\
\hline
\end{tabular} 
\end{minipage}
\caption{The threshold values $\rho_0(d)$ for $\rho(d)$ for $3\le d \le 10$. \label{tab:rho}}
\end{table}

Using Corollary~\ref{cor:Weyl-sparse-Dioph-1} we will establish the following estimate on the discrepancy of the point set 
$$
\left\{ \{f(n)\}  : ~n \in \Grs\right\}.
$$

\begin{theorem}
\label{thm:Sparse-Equidistr}
For  any polynomial $f\in \R[Z]$ of degree $d \ge 3$ of the form~\eqref{eq:Poly} and with the leading coefficient $\alpha_d$ of Diophantine type $\tau=1$, 
we have 
\begin{equation}\label{eq:thequi}
D_f( \Grs)  \le   {r \choose s}^{-\beta_1(d)}
2^{\gamma_1(d) r + o(r)}
+ 
{r \choose s}^{-\beta_2(d)}
2^{\gamma_2(d) r + o(r)}
+
{r \choose s}^{-\beta_3(d)}
2^{\gamma_3(d) r + o(r)}
\end{equation} 
as $r\to \infty$, where 
\begin{align*}
&\qquad\quad \beta_1(d)  = \frac{2 d^2-4 d+3}{4d^2-8d+7},  \qquad   \beta_2(d)  = \frac{d(d-1)}{2d^2 -2d+1}, \qquad \beta_3(d)=\frac{1}{2},\\
& \gamma_1(d)   =\frac{2 d^2-5 d+4}{4d^2-8d+7}, \qquad \gamma_2(d)  = \frac{d^2-2d+2}{2 d^2-2d+1},  \qquad  \gamma_3(d)   = \frac{(d-1) (d-2)}{2 (d^2-3d+3)}. 
\end{align*}
\end{theorem}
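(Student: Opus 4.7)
My plan is to apply the Erd\H{o}s--Tur\'an--Koksma inequality (as foreshadowed in the outline in Section~\ref{sec:equi}) to the point set $\{\{f(n)\}:n\in\Grs\}$. With $N=\#\Grs=\binom{r}{s}$ this yields
$$
D_f(\Grs)\ll \frac{1}{H}+\frac{1}{\binom{r}{s}}\sum_{h=1}^{H}\frac{|S_f(r,h,s)|}{h}
$$
for any integer $H\ge 1$, so the task reduces to estimating the weighted sum of exponential sums on the right, for which Corollary~\ref{cor:Weyl-sparse-Dioph-1} (with $\ell=h$) is exactly tailored.

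I would then substitute~\eqref{eq:cor2bd} and apply $\sqrt{a+b+c}\le\sqrt a+\sqrt b+\sqrt c$ to decouple the three summands inside the bracket. Using $\sum_{h\le H}h^{\mu-1}\ll H^{\mu}\cdot r^{O(1)}$ for any fixed $\mu>0$ (with $\mu=0$ producing only a harmless $\log H$ factor), this yields three contributions of the shape
$$
\frac{2^{(1/2-\sigma_j+o(1))r}}{\binom{r}{s}^{1/2}}\,H^{\mu_j},\qquad j=1,2,3,
$$
where the exponents $(\mu_j,\sigma_j)$ are read off from the three terms of~\eqref{eq:cor2bd} as $(\mu_1,\sigma_1)=(\eta_1\eta_2/(2(\eta_1+\eta_2)),\zeta_3\eta_1/(2(\eta_1+\eta_2)))$, $(\mu_2,\sigma_2)=(\vartheta/2,\zeta_1/2)$, and $(\mu_3,\sigma_3)=(0,\zeta_2/2)$.

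Finally, I balance the leading $1/H$ against each $H^{\mu_j}$ separately. Letting $H_j^\ast$ denote the resulting optimal value and taking $H=\min_j H_j^\ast$ (so that $H\le H_j^\ast$ makes the $j$-th contribution at most $(H_j^\ast)^{\mu_j}\cdot(\cdots)$, while the bound $1/H\le \sum_j 1/H_j^\ast$ handles the reciprocal), the discrepancy is bounded by the sum of three individually optimised terms
$$
\binom{r}{s}^{-1/(2(1+\mu_j))}\cdot 2^{(1/2-\sigma_j)r/(1+\mu_j)+o(r)},\qquad j=1,2,3.
$$
The identification with the $\beta_j(d)$ and $\gamma_j(d)$ in~\eqref{eq:thequi} is a direct algebraic simplification; the only slightly non-obvious step is the reduction $\eta_1\eta_2/(\eta_1+\eta_2)=1/(2d^2-4d+3)$ and $\zeta_3\eta_1/(\eta_1+\eta_2)=(d-1)/(2d^2-4d+3)$, after which the pair $(\beta_1,\gamma_1)$ falls out on computing $1/(2(1+\mu_1))$ and $(1/2-\sigma_1)/(1+\mu_1)$; the cases $j=2,3$ are immediate (with $j=3$ degenerate, leaving no $H$-dependence aside from a $\log H$ factor absorbed in $2^{o(r)}$). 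I do not expect a conceptual obstacle: once Corollary~\ref{cor:Weyl-sparse-Dioph-1} is in hand the argument is essentially bookkeeping, provided one is careful to check that each $H_j^\ast$ lies in $[1,2^{O(r)}]$ so that the polylogarithmic remainders are harmlessly absorbed in $2^{o(r)}$.
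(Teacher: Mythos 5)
Your proposal follows essentially the same route as the paper: the Erd\H{o}s--Tur\'an--Koksma inequality, substitution of Corollary~\ref{cor:Weyl-sparse-Dioph-1}, decoupling the three bracket terms via $\sqrt{a+b+c}\le\sqrt a+\sqrt b+\sqrt c$, and optimising the truncation parameter term by term; the paper merely packages your hand-balancing of $1/H$ against each $H^{\mu_j}$ as an application of Lemma~\ref{lem: Optim} with $Z_1=0$, $Z_2=\binom{r}{s}$, which yields the identical three exponent pairs $(\beta_j,\gamma_j)$. Your algebraic identifications of $\eta_1\eta_2/(\eta_1+\eta_2)$ and $\zeta_3\eta_1/(\eta_1+\eta_2)$ and the resulting values of $1/(2(1+\mu_j))$ and $(1/2-\sigma_j)/(1+\mu_j)$ all check out.
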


It follows from the proofs of Theorems~\ref{thm:cong-Equidistr} and~\ref{thm:Sparse-Equidistr} (or by direct calculation) that $\nu_1=\gamma_2-\delta_2$, and $\nu_2=\gamma_3-\delta_3$ (since $\gamma_1-\delta_1 > \min\{\gamma_2-\delta_2,\gamma_3-\delta_3\}$ for all $d\ge 3$, the exponent $\gamma_1-\delta_1$ does not occur on the right hand side of the estimate~\eqref{eq:thequicong}).

Note that for $d\ge 3$ we always have $\beta_j(d)  > \gamma_j(d)$ ($j\in\{1,2,3\}$). Because 
$$
\# \Grs = {r \choose s} \gg 2^r r^{-1/2}
$$ 
for $s$ close to $r/2$, the estimate in~\eqref{eq:thequi} is certainly nontrivial for these choices of $s$. Thus, analogously to Corollary~\ref{cor:Weyl-sparse-Dioph-2} we can formulate the following result.

\begin{cor}
\label{cor:equi1} 
Let $d\ge 3$ and $\rho(d)$ as in Corollary~\ref{cor:Weyl-sparse-Dioph-2}. Let $f\in \R[Z]$ be any polynomial of degree $d \ge 3$ of the form~\eqref{eq:Poly} and with leading coefficient $\alpha_d$ of Diophantine type $\tau=1$. Then there is $\mu(d)>0$ such that for $s/r \in  [\rho(d), 1/2]$ we have 
\begin{equation}\label{eq:disspec}
D_f( \Grs)  \le   \binom{r}{s}^{-\mu(d)}
\end{equation}
provided that $r$ is large enough. 
\end{cor}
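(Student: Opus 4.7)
The plan is to deduce Corollary~\ref{cor:equi1} by combining Theorem~\ref{thm:Sparse-Equidistr} with the entropy asymptotics~\eqref{eq:GHrs}. Since $\binom{r}{s} = 2^{rH(s/r) + o(r)}$, each of the three terms on the right-hand side of~\eqref{eq:thequi} can be rewritten as
$$
\binom{r}{s}^{-\beta_j(d)} 2^{\gamma_j(d)r + o(r)} = \binom{r}{s}^{-\beta_j(d) + \gamma_j(d)/H(s/r) + o(1)},\qquad j \in \{1,2,3\}.
$$
Hence it suffices to exhibit $\mu(d) > 0$ such that, for each $j \in \{1,2,3\}$ and every $s/r \in [\rho(d), 1/2]$, one has $\beta_j(d) - \gamma_j(d)/H(s/r) \ge 2\mu(d)$; the factor $2$ provides room to absorb the $o(1)$ error and to collect the three contributions into the single bound~\eqref{eq:disspec}. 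Since $H$ is strictly increasing on $[0, 1/2]$, this reduces to verifying $\gamma_j(d)/\beta_j(d) < H(\rho(d))$ for each $j$.

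The essential algebraic input is
$$
\max_{j \in \{1,2,3\}} \frac{\gamma_j(d)}{\beta_j(d)} = 1 - \xi(d), \qquad d \ge 3,
$$
where $\xi(d) = \min\{\zeta_1(d), \zeta_2(d)\}$ is as in~\eqref{eq:xidef}. A direct computation with the explicit expressions for $\beta_j, \gamma_j$ given in Theorem~\ref{thm:Sparse-Equidistr} and for $\zeta_1, \zeta_2$ given in~\eqref{eq:etazetatheta} shows that the maximum is attained at $j = 2$ when $d = 3$ and at $j = 3$ when $d \ge 4$, mirroring the pattern visible in~\eqref{eq:zetaineq} and in the remark following Theorem~\ref{thm:Sparse-Equidistr} identifying $\nu_1$ and $\nu_2$ with $\gamma_2 - \delta_2$ and $\gamma_3 - \delta_3$, respectively. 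Combined with the defining inequality $1 - \xi(d) < H(\rho(d))$ that introduces $\rho(d)$ in~\eqref{eq:xiHbound}, this gives the strict inequality $\gamma_j(d)/\beta_j(d) < H(\rho(d))$ for every $j$, and any sufficiently small positive
$$
\mu(d) < \min_{j \in \{1,2,3\}} \bigl(\beta_j(d) - \gamma_j(d)/H(\rho(d))\bigr)
$$
then yields~\eqref{eq:disspec} for all large enough $r$.

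The main obstacle is the purely algebraic verification that each of the three ratios $\gamma_j(d)/\beta_j(d)$ is uniformly dominated by $1 - \xi(d)$ across all $d \ge 3$; this amounts to a finite set of rational inequalities in $d$, but the expressions are moderately unwieldy and split cases at $d = 3$ versus $d \ge 4$ in the same way as~\eqref{eq:zetaineq}. Once this check is in place, the remainder of the proof is the formal rearrangement of Theorem~\ref{thm:Sparse-Equidistr} by means of~\eqref{eq:GHrs} and the monotonicity of $H$ on $[0, 1/2]$, with no further analytic input required.
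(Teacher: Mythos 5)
Your proposal is correct and follows essentially the same route as the paper: the authors likewise combine Theorem~\ref{thm:Sparse-Equidistr} with the entropy asymptotics~\eqref{eq:GHrs}, reduce the claim to $\max_{1\le j\le 3}\gamma_j(d)/\beta_j(d) < H(\rho(d))$ via the monotonicity of $H$ on $[0,1/2]$, and verify by direct calculation that this maximum equals $1-\xi(d)$ so that~\eqref{eq:xiHbound} applies. Your case analysis (maximum at $j=2$ for $d=3$, at $j=3$ for $d\ge 4$) is consistent with the identities $\gamma_2/\beta_2 = 1-\zeta_1(d)$ and $\gamma_3/\beta_3 = 1-\zeta_2(d)$, so the remaining algebra checks out.
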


In view of~\eqref{eq:GHrs}, \eqref{eq:Hrs} and~\eqref{eq:thequi} we get a nontrivial bound 
in~\eqref{eq:disspec} for large $r$ if and only if 
$$
\max\{ \gamma_j(d) / \beta_j(d):~1\le j\le 3\}  < H(\rho(d)).
$$
Direct calculation shows that 
$$
\max\{ \gamma_j(d) / \beta_j(d):~ 1\le j\le 3\} = 1-\xi(d)
$$
holds for $d\ge3$.
Thus~\eqref{eq:xiHbound} implies that the function $\rho(d)$ in Corollary~\ref{cor:equi1} can indeed chosen to be the same as in Corollary~\ref{cor:Weyl-sparse-Dioph-2}.

As in Theorem~\ref{thm:Weyl-cong}, the estimate in Theorem~\ref{thm:Weyl-sparse} becomes trivial if one of the summands on the right hand side of~\eqref{eq:Weyl-sparse} is too large. Thus again we give a result that is valid for a wider range of $q$.

\begin{theorem}
\label{thm:Weyl-sparseLog}
For  any polynomial $f\in \R[Z]$ of degree $d \ge 3$ of the form~\eqref{eq:Poly} and
with the leading coefficient $\alpha_d$ satisfying~\eqref{eq:Approx}, 
we have 
\begin{equation}\label{eq:Weyl-sparseLog}
|S_{f}(r,\ell, s)|   \le       r2^{r/2}   \binom{r}{s}^{1/2} \(\ell^{\widetilde\eta_1(d) } q^{-\widetilde\eta_1(d) } + \ell^{\widetilde\vartheta(d)}2^{-\widetilde\zeta_1(d)  r} + 2^{-\widetilde\zeta_2(d)  r} +  2^{-\widetilde\zeta_3(d)  r} q^{\widetilde\eta_2(d) } \)^{1/2}
\end{equation}
as $r\to \infty$, where  $\widetilde\eta_1(d), \widetilde\eta_2(d), \widetilde\vartheta(d),\widetilde\zeta_1(d), \widetilde\zeta_2(d)$ and $\widetilde\zeta_3(d)$ are defined in~\eqref{eq:tildes}.
\end{theorem}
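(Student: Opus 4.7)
The plan is to mirror the proof of Theorem~\ref{thm:Weyl-sparse} from Section~\ref{sec:sparseproof}, replacing every invocation of the uniform Weyl estimate of Section~\ref{sec:Preps} by its logarithmic counterpart --- the version carrying an explicit $r$ factor in place of a $2^{o(r)}$ loss and, in return, remaining meaningful for significantly smaller moduli $q$. This is exactly the relationship between Theorems~\ref{thm:Weyl-cong} and~\ref{thm:Weyl-congLog} in the congruence case, and the two proofs should be identical modulo which of the two uniform Weyl estimates is cited.

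First I would open up the restriction $\sigma(n) = s$ via the orthogonality identity
\begin{equation*}
\mathbf{1}_{\sigma(n) = s} = \frac{1}{r+1} \sum_{h=0}^{r} \e\!\left( h(\sigma(n) - s)/(r+1) \right),
\end{equation*}
valid because $0 \le \sigma(n) \le r$ for $n < 2^r$, reducing $S_f(r,\ell,s)$ to a linear combination of the twisted sums $V_h = \sum_{n<2^r} \e(\ell f(n) + h\sigma(n)/(r+1))$. Each $V_h$ is then treated by the BaCoSh-style decomposition of~\cite{BaCoSh} used in the proof of Theorem~\ref{thm:Weyl-sparse}: split $n = n_1 + 2^u n_2$ with $n_1<2^u$ and $n_2<2^{r-u}$, use the additivity $\sigma(n) = \sigma(n_1) + \sigma(n_2)$ to factor the digital twist, apply Cauchy--Schwarz on the low-block variable to pull out the combinatorial factor accounting for the sparsity of $\cG_s(r)$, and reduce the surviving inner sum to a full-range polynomial Weyl sum of degree $d$ with rescaled leading coefficient $\alpha_d 2^{ud}$, which is bounded by the logarithmic form of the uniform Weyl estimate from Section~\ref{sec:Preps}.

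The principal technical obstacle is bookkeeping: the four competing terms on the right-hand side of~\eqref{eq:Weyl-sparseLog} originate from balancing different combinations of the splitting parameter $u$, the length $r$, and $\log q$, and one has to track how the rational approximation~\eqref{eq:Approx} is modified by the rescaling $\alpha_d \mapsto \alpha_d 2^{ud}$ (namely $q$ is effectively replaced by $q/\gcd(q,2^{ud})$). The Lagrange-multiplier--style optimization of $u$ produces the exponents $\widetilde\eta_1(d),\widetilde\eta_2(d),\widetilde\vartheta(d),\widetilde\zeta_1(d),\widetilde\zeta_2(d),\widetilde\zeta_3(d)$ in~\eqref{eq:tildes} in exact parallel with the optimization giving~\eqref{eq:etazetatheta} in Theorem~\ref{thm:Weyl-sparse}; the combined prefactor $r\cdot 2^{r/2}\binom{r}{s}^{1/2}$ then combines the $r$ inherited from the log-form Weyl bound with the Cauchy--Schwarz trivial estimate $\bigl(\binom{r}{s}\cdot 2^r\bigr)^{1/2}$ for the sparse restriction. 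No conceptually new ingredient beyond the log-variant of the uniform Weyl bound is required.
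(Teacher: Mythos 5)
Your overall strategy --- rerun the proof of Theorem~\ref{thm:Weyl-sparse} with the logarithmic bound~\eqref{eq:Bound T log} in place of~\eqref{eq:Bound T o(1)} --- is exactly what the paper does, and the analogy with the passage from Theorem~\ref{thm:Weyl-cong} to Theorem~\ref{thm:Weyl-congLog} is as you describe. However, your opening move is wrong and would not prove the stated bound. Detecting $\sigma(n)=s$ by the orthogonality identity and then bounding each full-range twisted sum $V_h=\sum_{n<2^r}\e(\ell f(n)+h\sigma(n)/(r+1))$ separately discards the sparsity of $\Grs$ before Cauchy--Schwarz is applied: for $V_h$ the diagonal term after Cauchy--Schwarz is $2^{r-u}\cdot 2^{r-u}\cdot 2^u=2^{2r-u}$ and the off-diagonal pair count is $2^{2u}$, so the best you can obtain this way is a bound of the shape $r\,2^{r}\Delta^{1/2}$ (as in Theorem~\ref{thm:Weyl-congLog}), not $r\,2^{r/2}\binom{r}{s}^{1/2}\Delta^{1/2}$. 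Since $\binom{r}{s}\le 2^r$ with a large gap for $s$ away from $r/2$, the claimed estimate is strictly stronger than anything that survives the orthogonality step, and the combinatorial factor cannot be ``pulled out by Cauchy--Schwarz'' afterwards because the restriction to $\Grs$ is no longer present in $V_h$. The paper never uses additive characters in the $\sigma$-variable here: it keeps the restriction throughout, writes $n=2^ux+y$ so that $\Grs$ decomposes as $\bigcup_{j}\cG_{s-j}(r-u)\times\cG_j(u)$, applies the Cauchy inequality with the weights $\binom{r-u}{s-j}$, and then uses $\sum_j\binom{r-u}{s-j}\binom{u}{j}=\binom{r}{s}$ together with $\binom{u}{j}^2\le 2^u\binom{u}{j}$ to produce the factor $2^{r/2}\binom{r}{s}^{1/2}$.

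Two smaller inaccuracies in your sketch of the main body of the argument: after Cauchy--Schwarz and differencing, the surviving inner sum over $x$ involves $F_{y,z}(Z)=\ell f(2^uZ+y)-\ell f(2^uZ+z)$, whose degree-$d$ terms cancel; it has degree $d-1$, with leading coefficient $\alpha_d$ multiplied by the integer $h=\ell d\,2^{(d-1)u}(y-z)\ll\ell 2^{du}$. The exponents in~\eqref{eq:tildes} come from applying the $h$-uniform Lemma~\ref{lem:Weyl-h} in degree $d-1$ (whence $\lambda=1/(d^2-3d+4)$ rather than $1/(d^2-d+2)$), with the multiplier handled via the $\gcd(h,q)$ term and the trivial bound $D\le h$ --- not from a degree-$d$ sum with rescaled leading coefficient $\alpha_d2^{ud}$ and $q$ replaced by $q/\gcd(q,2^{ud})$. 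With the decomposition corrected as above and Lemma~\ref{lem: Optim} used to optimise the splitting parameter $u$, the rest of your outline goes through.
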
  

As in Theorem~\ref{thm:Weyl-congLog}, by~\eqref{eq:tildeineq}, for $d=3$ we can omit the term $2^{-\widetilde\zeta_2(d)  r}$ in~\eqref{eq:Weyl-sparseLog}, while for $d\ge 4$ and $\ell=1$ the term $2^{-\widetilde\zeta_1(d)  r}$ can be omitted in~\eqref{eq:Weyl-sparseLog}.

\section{Preparations}
\label{sec:Preps}

\subsection{Optimization of power sums}
We need the following technical result, see~\cite[Lemma~2.4]{GrKol}.

\begin{lemma}
\label{lem: Optim}
For $I,J\in\mathbb{N}$ let
$$
F(Z)=\sum_{i=1}^I A_i Z^{a_i}+\sum_{j=1}^JB_j Z^{-b_j},
$$
where $A_i,B_j,a_i$ and $b_j$ are positive for $1\le i\le I$ and $1\le j \le J$. Let $0\leq Z_1\leq Z_2$. Then there is some $Z\in [Z_1,Z_2]$ with
$$
F(Z)\ll \sum_{i=1}^I\sum_{j=1}^J
\(A_i^{b_j}B_j^{a_i}\)^{1/(a_i+b_j)}+\sum_{i=1}^I A_iZ_1^{a_i}+\sum_{j=1}^J B_jZ_2^{-b_j},
$$
where the implied constant depends only on $I$ and $J$.
\end{lemma}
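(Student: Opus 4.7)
The strategy is the standard balance-point method for optimizing power sums. The toy model is the one-term case $AZ^a+BZ^{-b}$: this is minimized on $(0,\infty)$ at $Z^*=(B/A)^{1/(a+b)}$ with minimum value $2(A^bB^a)^{1/(a+b)}$, and if $Z^*\notin[Z_1,Z_2]$ the minimum on that interval occurs at the endpoint nearest to $Z^*$, giving $AZ_1^a$ or $BZ_2^{-b}$. The claim is that for a sum of many such pairs, the three resulting regimes together provide an upper bound for a suitable $Z\in[Z_1,Z_2]$, with the cost being a constant depending on $I$ and $J$.

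For each pair $(i,j)$ set $Z_{ij}^*=(B_j/A_i)^{1/(a_i+b_j)}$; at this point $A_i(Z_{ij}^*)^{a_i}=B_j(Z_{ij}^*)^{-b_j}=(A_i^{b_j}B_j^{a_i})^{1/(a_i+b_j)}$. Since $Z\mapsto A_iZ^{a_i}$ is increasing and $Z\mapsto B_jZ^{-b_j}$ is decreasing, one has the pointwise implications
$$
Z\le Z_{ij}^*\Longrightarrow A_iZ^{a_i}\le (A_i^{b_j}B_j^{a_i})^{1/(a_i+b_j)},\quad Z\ge Z_{ij}^*\Longrightarrow B_jZ^{-b_j}\le (A_i^{b_j}B_j^{a_i})^{1/(a_i+b_j)}.
$$
I would then take $Z$ to be a minimizer of $F$ over the finite candidate set $\mathcal Z=\{Z_1,Z_2\}\cup (\{Z_{ij}^*\}_{i,j}\cap[Z_1,Z_2])$; this is elementary and, up to a constant depending only on $I$ and $J$, matches the true infimum of $F$ on $[Z_1,Z_2]$.

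It remains to bound each of the $I+J$ terms of $F(Z)$ by at most one summand of the asserted right-hand side. For a fixed index $i$, use a dichotomy: either some $j$ has $Z_{ij}^*\ge Z$, in which case the first implication above gives a cross-term bound on $A_iZ^{a_i}$; or $Z_{ij}^*<Z$ for every $j$, in which case a comparison argument using the minimizing property of $Z$ on $\mathcal Z$ forces $Z=Z_1$ and produces the boundary contribution $A_iZ_1^{a_i}$. The $B_j$ terms are handled symmetrically with the roles of $Z_1$ and $Z_2$ swapped. Summing these at most $I+J$ pointwise bounds yields the inequality with implied constant $\ll I+J$. The main obstacle is the second half of the dichotomy, namely showing that when no cross-bound is available for a particular $i$ the minimizer over $\mathcal Z$ must indeed coincide with $Z_1$; I would address this by examining one-sided monotonicity of $F$ at neighbouring candidates in $\mathcal Z$, or more robustly by slightly enlarging $\mathcal Z$ so that every dominance regime is represented, at the cost only of an extra constant absorbed into the $O$-notation.
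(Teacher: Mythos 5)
First, note that the paper does not prove this lemma at all: it simply quotes it from~\cite[Lemma~2.4]{GrKol}. So your attempt is to be measured against the standard proof of that result rather than against anything in the paper.

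Your ingredients are the right ones (the balance points $Z_{ij}^*$, the two monotonicity implications, the three regimes), but the argument has a genuine gap exactly where you flag it, and neither of your proposed repairs resolves it. The claim that, for $Z$ the minimizer of $F$ over $\mathcal Z$, the condition ``$Z_{ij}^*<Z$ for every $j$'' forces $Z=Z_1$ is false. Take $I=2$, $J=1$ with $Z_{11}^*<Z_{21}^*$ both interior to $[Z_1,Z_2]$, write $C_{ij}=(A_i^{b_j}B_j^{a_i})^{1/(a_i+b_j)}$, and arrange $Z_{21}^*/Z_{11}^*=1.1$, $a_1=1$ and $b_1$ large, so that $A_1(Z_{21}^*)^{a_1}=1.1\,C_{11}$ while $C_{21}=(Z_{11}^*/Z_{21}^*)^{b_1}C_{11}<0.1\,C_{11}$. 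Then $F(Z_{21}^*)=A_1(Z_{21}^*)^{a_1}+2C_{21}<1.3\,C_{11}<2C_{11}<F(Z_{11}^*)$, and with $Z_1$ small and $Z_2$ large the minimizer over $\mathcal Z$ is $Z_{21}^*\ne Z_1$, even though for $i=1$ no index $j$ supplies a cross bound. So the term-by-term dichotomy cannot be run at an arbitrary minimizer of $F$ over $\mathcal Z$ (note that in this example the conclusion of the lemma still holds at $Z_{21}^*$ --- it is only your justification that breaks).

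The fix is simpler than either of your suggested repairs: do not minimize at all, but take the single explicit point $Z=\max\bigl(Z_1,\min(Z_2,w)\bigr)$ with $w=\min_{1\le i\le I}\max_{1\le j\le J}Z_{ij}^*$, attained at $i=i_0$ say. If $Z=w$, then for every $i$ we have $w\le\max_jZ_{ij}^*$, so some $j$ gives $A_iZ^{a_i}\le C_{ij}$, while $w\ge Z_{i_0j}^*$ for all $j$ gives $B_jZ^{-b_j}\le C_{i_0j}$. If $w<Z_1$, the $A_i$ terms are exactly the boundary terms $A_iZ_1^{a_i}$ and the $B_j$ terms are still dominated by $C_{i_0j}$ since $Z_1>w\ge Z_{i_0j}^*$; the case $w>Z_2$ is symmetric. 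Summing the $I+J$ individual bounds gives the lemma with implied constant at most $I+J$. (Your choice of $Z$ as the minimizer over $\mathcal Z$ does work in the end, but only via the observation that the clamped $w$ belongs to $\mathcal Z$ and already satisfies the bound, so the minimizer does at least as well --- i.e.\ the correct logic is ``some candidate is good, hence the best candidate is good,'' not a pointwise analysis at the minimizer.)
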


\subsection{Bounds of Weyl sums}
In order to prove Theorem~\ref{thm:Weyl-sparse} we need to give a bound of the classical Weyl sums with integer multiples of real polynomials.  Namely, for an integer $h$ we we need to estimate the sum 
$$
T_{f}(h,N)=\sum_{n =1}^N \e(hf(n)).
$$
We  obtain a bound for $T_f$ in terms of the leading coefficient of $f\in \R[Z]$ if $f$ is given as in~\eqref{eq:Poly} and~\eqref{eq:Approx}. Our main tool  is the {\it Vinogradov mean value theorem\/}. More precisely, we set 
$$
J_{d,s}(N) = \int_{0}^1\dots \int_{0}^1  \left| \sum_{n =1}^N \e(\alpha_1n + \dots +\alpha_d n^d) \right|^{2s} \mathrm{d}\alpha_1\ldots \mathrm{d}\alpha_d 
$$
and recall the optimal form of the Vinogradov mean value theorem established by Bourgain, Demeter and Guth~\cite{BDG} and Wooley~\cite{Wool1,Wool2}. This result states that, for $s \ge 1$ and $d \ge 2$, 
\begin{equation}
\label{eq:MVT}
J_{d,s}(N)\ll  N^{s +o(1)}   +  N^{2s - s(d)+o(1)}, 
\end{equation} 
where  
$$
s(d)=d(d+1)/2
$$
is the so-called {\it critical exponent}. Note that the first or second summand on the right hand side of~\eqref{eq:MVT} dominate if $s \le s(d)$ or $s\ge s(d)$, respectively.
Furthermore, by~\cite[Corollary~1.3]{Wool2} for $d\ge 3$ and $s > s(d)$ we can sharpen~\eqref{eq:MVT} to the asymptotic formula
\begin{equation}
\label{eq:MVT-Asymp}
J_{d,s}(N)= \(\gamma_{d,s}   + o(1)\) N^{2s - s(d)}, 
\end{equation} 
for some  constant $\gamma_{d,s} $, which depends only on $d$ and $s$ (see also the
comment after the formulation of~\cite[Theorem~2]{Bourg}). 

For $h=1$, as a consequence  of~\eqref{eq:MVT}, 
for  any polynomial $f\in \R[Z]$ with~\eqref{eq:Poly} and~\eqref{eq:Approx}  we have 
\begin{equation}
\label{eq:individual}  
|T_{f}(1,N)| \le N^{1+o(1)} \(q^{-1} + N^{-1} + qN^{-d}\)^{1/d(d-1)},  
\end{equation}
as $N\to \infty$, 
see~\cite[Theorem~5]{Bourg}.  The bound~\eqref{eq:individual} also follows if one 
substitutes~\eqref{eq:MVT} in a general inequality 
of Vaughan~\cite[Theorem~5.2]{Vau}. 

However for our purpose we need a version of~\eqref{eq:individual} which applies to the sums $T_{f}(h,N)$ with an arbitrary $h \ne0$. Vinogradov~\cite[Theorem~I, Chapter~VI]{Vin} provides an estimate of $T_{f}(h,N)$. One of his motivations was a question on the distribution of fractional parts of polynomials, see~\cite[Chapter~VIII]{Vin}. However, the  estimate in~\cite{Vin}  is based on an older version of the mean value theorem and thus can now be significantly improved. This is the content of Lemma~\ref{lem:Weyl-h}. In its proof we follow the derivation of~\eqref{eq:individual}  as in the proof~\cite[Theorem~5.2]{Vau}. Also  Lemma~\ref{lem:Weyl-h} is of independent interest and in particular can be used to improve the above-mentioned bound of~\cite[Chapter~VIII]{Vin} on the discrepancy of fractional parts of polynomials. 

\begin{lemma}
\label{lem:Weyl-h}
For  any polynomial $f\in \R[Z]$, satisfying~\eqref{eq:Poly} and~\eqref{eq:Approx}, and for any integer $h \ne 0$ we have 
\begin{equation}
\label{eq:Bound T o(1)} 
|T_{f}(h,N)| \le    N^{1+o(1)} \Delta^{1/d(d-1)}, 
\end{equation}
and 
\begin{equation}
\label{eq:Bound T log} 
T_{f}(h,N) \ll    N \Delta^{1/(d^2-d +2)}  \log N, 
\end{equation}
as $N\to \infty$, where
$$
\Delta =  h q^{-1}+ N^{-1}  +  qN^{-d}+ D N^{-d+1}
$$
with  $D = \gcd(h,q)$.  
\end{lemma}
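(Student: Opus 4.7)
The proof will adapt Vaughan's derivation of~\cite[Theorem~5.2]{Vau} (which underlies both~\eqref{eq:individual} and~\cite[Theorem~5]{Bourg}) to the presence of an arbitrary non-zero integer multiplier $h$, substituting the modern mean value theorem~\eqref{eq:MVT}--\eqref{eq:MVT-Asymp} in place of the classical estimate used in~\cite[Chapter~VI]{Vin}. The only genuinely new ingredient relative to the case $h=1$ is the extraction of the Diophantine data of $h\alpha_d$ from the approximation~\eqref{eq:Approx} of $\alpha_d$, and its propagation through the integer-point counts at the end of the argument.

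The first step is to approximate $h\alpha_d$. Writing $D=\gcd(h,q)$, $q^{*}=q/D$, $h'=h/D$, one derives from~\eqref{eq:Approx} that
$\bigl|h\alpha_d-h'a/q^{*}\bigr|<h/q^{2}$ with $\gcd(h'a,q^{*})=1$.
This approximation is of Dirichlet quality only when $h\le D^{2}$, so both the reduced denominator $q^{*}$ and the original error $h/q^{2}$ must be carried through all the subsequent estimates. Next, following the Vinogradov--Hua averaging of~\cite[Chapter~5]{Vau} (a shift $n=x+y$ combined with H\"older's inequality) and applying~\eqref{eq:MVT} at the critical exponent $s(d)=d(d+1)/2$, one reduces a suitable power of $|T_f(h,N)|$ to a count $\mathcal N$ of integer values $m$ in a range of size $\ll N^{d-1}$ with $\|h\alpha_d\,m\|\ll 1/N$. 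Splitting $m$ into residue classes modulo $q^{*}$ and using the approximation from the first step bounds $\mathcal N$ by an expression proportional to $hq^{-1}+N^{-1}+qN^{-d}+DN^{-d+1}$: the four summands arise, respectively, from the approximation error $h/q^{2}$, the allowed window $1/N$, the ``short-window'' regime $qN^{-d}$, and the lattice spacing $1/q^{*}=D/q$ on scale $N^{d-1}$. Extracting a $2s$-th root and balancing the auxiliary shift parameter via Lemma~\ref{lem: Optim} gives~\eqref{eq:Bound T o(1)}, with the exponent $1/(d(d-1))$ coming out of the algebraic balance between the MVT exponent $s(d)$ and the $N$-powers in the lattice count.

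For~\eqref{eq:Bound T log} I would run the same argument with the sharper asymptotic~\eqref{eq:MVT-Asymp} at $s=s(d)+1$ (one step above the critical exponent); this replaces the $N^{o(1)}$ factor by a single $\log N$, at the cost of shifting the final exponent to $1/(d^{2}-d+2)$ because one degree of freedom is lost in the counting step. The main obstacle throughout is the lattice-point estimate: the passage from the Diophantine data of $\alpha_d$ (denominator $q$) to the effective data for $h\alpha_d$ (denominator $q^{*}=q/D$ but with larger error $h/q^{2}$) must be done carefully enough that exactly the four summands of $\Delta$ appear, with no spurious cross-terms. This is precisely where the new term $DN^{-d+1}$ is produced and where the proof genuinely departs from the $h=1$ case underlying~\eqref{eq:individual}.
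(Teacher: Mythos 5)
Your overall strategy is the paper's: adapt the proof of Vaughan's Theorem~5.2, insert the modern mean value theorem~\eqref{eq:MVT}/\eqref{eq:MVT-Asymp}, and track how the multiplier $h$ and the quantity $D=\gcd(h,q)$ propagate through the lattice-point count (your observation that $h\alpha_d$ is approximated by a fraction with denominator $q/D$ but error $h/q^2$ is exactly the mechanism the paper uses to produce $R$ and the term $DN^{-d+1}$). However, there is a concrete error in the way you invoke the mean value theorem. The Vaughan reduction performs one Weyl differencing step, so the mean value integral that appears is $J_{d-1,s}$, not $J_{d,s}$: the paper's inequality reads $T_f(h,N)^{2s}\ll N^{d(d-1)/2}(\log N)^{2s}J_{d-1,s}(2N)\,R/L$, and one takes $s$ at the critical exponent \emph{for degree $d-1$}, namely $s(d-1)=d(d-1)/2$, giving the exponent $1/(2s(d-1))=1/(d(d-1))$ in~\eqref{eq:Bound T o(1)}, and $s=s(d-1)+1$ with~\eqref{eq:MVT-Asymp}, giving $1/(2(s(d-1)+1))=1/(d^2-d+2)$ in~\eqref{eq:Bound T log}. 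With your choice $s(d)=d(d+1)/2$ the same arithmetic would produce $1/(d(d+1))$ and $1/(d^2+d+2)$, which are not the stated exponents; the appeal to ``algebraic balance'' and ``one degree of freedom lost in the counting step'' does not close this gap, since in this argument the final exponent is exactly the reciprocal of $2s$.

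A second, smaller issue is the form of your lattice count. You count $m$ in a range of size $N^{d-1}$ with $\Vert h\alpha_d m\Vert\ll 1/N$; expanding the standard spacing bound with denominator $q/D$ and error $h/q^2$ over that full range produces a cross term of order $N^{2d-2}hq^{-2}$, which is \emph{not} dominated by the summands of $\Delta$ unless $q\geq N^{d-1}$. The paper avoids this by counting differences $x-y$ with $x,y\in\{1,\dots,L\}$ in the window $N^{-d+1}$, keeping the shift length $L\leq N$ as a free parameter (the constraint $L\leq N$ being needed to apply the relevant step of Vaughan's proof), and only then optimising $L$: the dangerous term $DhLq^{-2}$ is balanced against $(qN^{-d+1}+1)/L$, and the boundary case $L=N$ is what produces the summands $qN^{-d}$ and $N^{-1}$ of $\Delta$. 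Your proposal mentions balancing a shift parameter via Lemma~\ref{lem: Optim}, but that is inconsistent with counting over the full range $N^{d-1}$; you need to set up the count with $L$ explicitly present before any optimisation can remove the spurious cross term.
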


\begin{proof} We apply the reduction from Weyl sums to~\eqref{eq:MVT}, which is 
given in the proof of~\cite[Theorem~5.2]{Vau}. Hence, we very frequently appeal to 
the definitions and estimates in~\cite[Section~5.2]{Vau}, which we specify to $j=d$.

We now fix some parameter $L\le N$ to be optimised later. 
We remark that we need the condition $L\le N$ in order to apply~\cite[Equaltion~(5.23)]{Vau}. 
Now, for each $x \in \{1,\ldots,L\}$ the number of integers $y \in \{1,\ldots,L\}$ with 
$$
\left\| (d!)^d \alpha_d h(x-y) \right\| \le N^{-d+1}
$$ 
is bounded by  the number integers $y \in \{1,\ldots,L\}$ with 
$$
\left\| (d!)^d a h(x-y)/q \right\| \le N^{-d+1}+ (d!)^d h Lq^{-2} , 
$$ 
which is at most 
$$
R = \((d!)^d D Lq^{-1} +1\)\(2qN^{-d+1} + 2(d!)^d h L q^{-1} + 1\).
$$
This is similar to the definition~\cite[Equation~(5.35)]{Vau}, however adjusted to take into account 
the influence of $h$. 
Indeed  $hy$ may belong to at most 
$2 q( N^{-d+1}+ (d!)^d h Lq^{-2}) + 1$ residue classes modulo $q$. For each residue class,
$y$ is uniquely defined modulo $q/D$.  

We now proceed exactly as  in~\cite[Section~5.2]{Vau} and thus, using~\eqref{eq:MVT} with
$d-1$ instead of $d$ and for  any integer $s \ge 1$, we obtain 
\begin{equation}
\label{eq:TJR/L}  
T_{f}(h,N)^{2s} \ll  N^{d(d-1)/2}  \( \log N\)^{2s}  J_{d-1,s} (2N) R/L  . 
\end{equation}
Hence it remains to choose $L$ to minimise  the ratio $R/L$. 
We have 
$$
R   \ll DL N^{-d+1}  +  Dh L^2q^{-2}+ qN^{-d+1} + h L q^{-1} + 1, 
$$
where we have dropped the term $D L q^{-1} \le h L q^{-1}$. 
Thus
$$
R/L \ll  D N^{-d+1}  + h q^{-1} +  Dh Lq^{-2}+ \(qN^{-d+1}+  1\)/L. 
$$
We now define $L_0$ by the equation 
$$
 Dh L_0q^{-2}= \(qN^{-d+1}+  1\)/L_0
 $$
 and set 
 $$
 L =\min\{\rf{L_0}, N\}.
 $$
 If $L=\rf{L_0}$ then 
\begin{align*}
R/L & \ll  D N^{-d+1}  + h q^{-1} +  \(Dh q^{-2}\(qN^{-d+1}+  1\)\)^{1/2} \\
& \ll  D N^{-d+1}  + h q^{-1} +  D^{1/2} h^{1/2}   q^{-1/2} N^{-(d-1)/2} +  D^{1/2} h^{1/2}   q^{-1}\\
& \ll  D N^{-d+1}  + h q^{-1} +  D^{1/2} h^{1/2}   q^{-1/2} N^{-(d+1)/2}.
\end{align*}
Since the term $D^{1/2} h^{1/2}   q^{-1/2} N^{-(d+1)/2}$ is the geometric mean of the other two terms,
we obtain 
$$
R/L  \ll   D N^{-d+1}  + h q^{-1}.
$$
 If $L=N$ then  
 \begin{align*}
R/L & \ll  D N^{-d+1}  + h q^{-1} +  \(qN^{-d+1}+  1\)/N \\
& \ll  D N^{-d+1}  + h q^{-1} +  qN^{-d} + N^{-1}.
 \end{align*}
 
 Choosing $s = s(d-1)$ and applying~\eqref{eq:MVT} and then also choosing $s = s(d-1)+1$ and applying~\eqref{eq:MVT-Asymp}, 
together with~\eqref{eq:TJR/L},  we derive the desired bounds~\eqref{eq:Bound T o(1)}  and~\eqref{eq:Bound T log}, respectively. 
\end{proof}

In the following remarks we compare the strength of the bounds~\eqref{eq:Bound T o(1)} and~\eqref{eq:Bound T log}  of Lemma~\ref{lem:Weyl-h}, and also compare these bounds with other results. 

\begin{rem} 
The estimate in~\eqref{eq:Bound T log} is valid for a wider range of moduli $q$ at the cost that it is somewhat weaker than~\eqref{eq:Bound T o(1)}. More precisely, the first bound~\eqref{eq:Bound T o(1)} of Lemma~\ref{lem:Weyl-h} is better than the second one~\eqref{eq:Bound T log} unless one of the summands of $\Delta$ are larger than any negative power of $N$.  Large summands occur if $q$ is either small or close to $N^d$, or if $D$ is close to $N^{d-1}$. In these cases~\eqref{eq:Bound T o(1)} becomes trivial and~\eqref{eq:Bound T log} still gives a nontrivial estimate.
\end{rem} 

\begin{rem} 
We note that~\cite[Theorem~9]{Hal} implies a slightly weaker version of~\eqref{eq:Bound T o(1)}. Our improvement is due to the fact that we enter the details of the proof of~\cite[Theorem~5.2]{Vau}. 
\end{rem}

Clearly,  for $d \ge 2$, if $h=1$ and thus $D=1$ the term $ D N^{-d+1}\le N^{-1}$ never dominates  and we recover the bound~\eqref{eq:individual} from~\eqref{eq:Bound T o(1)}. 

Furthermore, if $d \ge 3$ and  $D \le h \le N$, which is a very important case for applications, the bounds of Lemma~\ref{lem:Weyl-h} simplify to
$$
|T_{f}(h,N)| \le    N^{1+ o(1)}  \(h q^{-1}+ N^{-1}  +  qN^{-d}\)^{1/d(d-1)} 
$$
and 
$$
T_{f}(h,N) \ll    N \(h q^{-1}+ N^{-1}  +  qN^{-d}\)^{(1/d^2-d+2)} \log N,
$$
respectively.

Finally, we observe that using the trivial bound $D \le h$ we obtain
\begin{equation}\label{eq:deltawithoutD}
\Delta \le  \(hNq^{-1} +1\) \(N^{-1} + q N^{-d}\).
\end{equation}

\section{Estimates of Weyl sums  with  congruence conditions on the sum of digits function and twisted by  special sequences} 
\label{sec:Thue}

\subsection{Proof of Theorem~\ref{thm:Weyl-cong}}
Following~\cite[Proof of Theorem~6]{BaCoSh} we set $X=2^{r-u}$, where $u\in\{0,\ldots, r\}$  is to be chosen later. For every $n\in \Grs$, write $n= 2^ux +y$ with
$x \in\{0,\ldots,2^{r-u}-1\}$ and $y \in \{0,\ldots,2^u-1\}$. Then
$$
U_{f}(r,\ell,k,m) 
=\sum_{j=0}^m \sum_{x\in\cE_{k-j,m}(r-u)} \sum_{y\in\cE_{j,m}(u)}
e\(\ell f\(2^ux +y\)\).
$$
By the Cauchy inequality, we have
\begin{align*}
 \big|U_{f}(r,\ell,&k,m)\big|^{2} 
   \le  m \sum_{j=0}^{m-1} \# \cE_{k-j,m}(r-u)
\sum_{x=0}^{X-1}\left|\sum_{y\in\cE_{j,m}(u)}\e\(\ell f(2^ux +y)\)\right|^{2}\\
&  =m\sum_{j=0}^{m-1}\# \cE_{k-j,m}(r-u)  \sum_{x=0}^{X-1}
\sum_{y,  z \in\cE_{j,m}(u)}
\e\(\ell f\(2^ux +y\) - \ell f\(2^ux +z\)\)\\
& \le m  \sum_{j=0}^{m-1} \# \cE_{k-j,m}(r-u) 
\(\#\cE_{j,m}(u)X +   \sum_{\substack{y,  z \in\cE_{j,m}(u)\\ y \ne z}} \left|\sum_{x=0}^{X-1}\e\(  F_{y,z} (x) )\) \right|\),
\end{align*} 
where 
\begin{equation}\label{eq:Fdef}
F_{y,z} (Z) = \ell f\(2^u Z+ y \) - \ell f\(2^u Z+ z\) \in \R[Z].
\end{equation}
Hence, using that 
$$
 \sum_{j=0}^{m-1} \# \cE_{k-j,m}(r-u)  \#\cE_{j,m}(u) = \# \cE_{k,m}(r) \le 2^r
\mand 
 \# \cE_{k-j,m}(r-u)  \le 2^{r-u} = X
$$
we infer  (note that from here onwards the estimates are no longer uniform in $m$) 
$$
U_{f}(r,\ell,k,m)^2  \ll 2^r X  +
X\sum_{j=0}^{m-1}    \sum_{\substack{y,  z \in\cE_{j,m}(u)\\ y \ne z}} \left|\sum_{x=0}^{X-1}\e\(  F_{y,z} (x) \) \right|.
$$

Clearly for $y \ne z$ the polynomial $F_{y,z}$
is of the form 
\begin{equation}\label{eq:Fform}
F_{y,z} (Z) = \ell d2^{(d-1)u} (y-z) G_{y,z} (Z)   \in \R[Z], 
\end{equation}
where  $G_{y,z} (Z)   \in \R[Z]$ is of degree $d-1$ and has the leading coefficient $\alpha_d$.

For $y\ne z$, we now recall Lemma~\ref{lem:Weyl-h} with $h \ll \ell 2^{du}$ and we also use the trivial estimate 
$D \le h$. In particular, from~\eqref{eq:Bound T o(1)} (see also~\eqref{eq:deltawithoutD}) 
we obtain 
\begin{equation}
\label{eq:thm2startcong}
\begin{split}
U_{f}(r,\ell,k,m)^2&  \ll  2^r X +      2^{2u}X^{2+o(1)}\(\(\ell 2^{du}Xq^{-1} +1\) \(X^{-1} + q X^{-(d-1)}\)\)^{1/(d-2)(d-1)}  \\
& \le 2^{2r+o(r)}  \(2^{-u} +\(\(\ell 2^{du}Xq^{-1} +1\) \(X^{-1} + q X^{-(d-1)}\)\)^{1/(d-2)(d-1)} \).
 \end{split}
 \end{equation}
 It is now convenient to denote 
 \begin{equation}\label{eq:RUkappa}
 R = 2^r , \qquad  U = 2^u, \qquad  \kappa = \frac{1}{(d-1)(d-2)}.
 \end{equation}
 Then we obtain 
 \begin{equation}\label{eq:UwithRUkappa}
 \left|U_{f}(r,\ell,k,m)\right|^{2}   \le     R^{2+o(1)} \(U^{-1} +   \(\(  \ell R U^{d-1} q^{-1} +1\) \(R^{-1} U +   R^{-(d-1)} U^{d-1} q\)\)^{\kappa} \).
\end{equation}
Expanding the product and pulling in the exponent we get
$$
 \left|U_{f}(r,\ell,k,m)\right|^{2}   \le       R^{2+o(1)}  \Delta, 
$$
 where 
 \begin{equation}\label{eq:defDeltaKappa}
 \begin{split}
  \Delta  = 
\ell^\kappa U^ {d \kappa}q^{-\kappa}   &
+ \ell^\kappa R^{-(d-2)\kappa}U^{2(d-1)\kappa} + R^{-\kappa} U^{\kappa} \\
&\qquad \qquad  
+ R^{- (d-1)\kappa}U^{(d-1)\kappa}q^{\kappa} +U^{-1}  .
\end{split}
\end{equation}
We now apply Lemma~\ref{lem: Optim} with $I = 4$, $J=1$, $Z=U$, $Z_1=0$, $Z_2=R$ and parameters
$$
(A_i,a_i)_{i=1}^4   = \(\(\ell^{\kappa}q^{-\kappa}, d \kappa\), \( \ell^{\kappa}R^{-(d-2)\kappa}, 2(d-1)\kappa\), \(R^{-\kappa}, \kappa\), \(R^{- (d-1)\kappa} q^{\kappa}, (d-1)\kappa\)\)
$$
and 
$$
(B_1,b_1) = (1,1). 
$$
Hence according to Lemma~\ref{lem: Optim} there is a choice of $U$ such that 
\begin{equation}\label{eq:defDeltaKappaEstimate}
\begin{split}
\Delta   \ll \ell^{\kappa/(d\kappa+1)}q^{-\kappa/(d\kappa+1)}& + \ell^{\kappa/(2(d-1)\kappa+1)}R^{-(d-2)\kappa/(2(d-1)\kappa+1)} \\
& \qquad  + R^{-\kappa/(\kappa+1)} +R^{- (d-1)\kappa/((d-1)\kappa+1)} q^{\kappa/((d-1)\kappa+1)}
\end{split}
\end{equation}
(note that the term $Z_2^{-b_1}=R^{-1}$ never dominates so we have omitted it). It is clear from the proof of Lemma~\ref{lem: Optim} in~\cite{GrKol} that this choice of $U$ is optimal.
Inserting $R=2^r$, the result now follows.

\subsection{Proof of Theorem~\ref{thm:Weyl-congLog}}

The proof runs along similar lines.

In the same way as we have derived~\eqref{eq:thm2startcong} we get, using~\eqref{eq:Bound T log} instead of~\eqref{eq:Bound T o(1)} and observing that $s\le r$, that
\begin{align*}
U_{f}(r,\ell,k,m)^2
& \ll r^22^{2r}  \(2^{-u} +   \(\(\ell 2^{du}Xq^{-1} +1\) \(X^{-1} + q X^{-(d-1)}\)\)^{1/(d-2)(d-1)} \).
 \end{align*}

It is now convenient to denote 
 \begin{equation}\label{eq:RUlambda}
 R = 2^r , \qquad  U = 2^u, \qquad  \lambda = \frac{1}{d^2-3d+4}.
 \end{equation}
 Then we obtain 
$$
U_{f}(r,\ell,k,m)^2   \ll     r^2R^2 \(U^{-1} +   \(\(  \ell R U^{d-1} q^{-1} +1\) \(R^{-1} U +   R^{-(d-1)} U^{d-1} q\)\)^{\lambda} \).
$$ 
We expand the product and pull in the exponent to get
$$
U_{f}(r,\ell,k,m)^2  \ll       r^2R  \binom{r}{s} \Delta, 
$$
 where 
 \begin{equation}\label{eq:defDeltaLambda}
  \Delta  = 
\ell^\lambda U^ {d \lambda}q^{-\lambda}  
+ \ell^\lambda R^{-(d-2)\lambda}U^{2(d-1)\lambda} + R^{-\lambda} U^{\lambda}  
+ R^{- (d-1)\lambda}U^{(d-1)\lambda}q^{\lambda} +U^{-1}  .
\end{equation} 
We can now apply Lemma~\ref{lem: Optim} in the same way as in the proof of Theorem~\ref{thm:Weyl-cong} to
obtain that there is a choice of $U$ such that 
\begin{equation}\label{eq:defDeltaLambdaEstimate}
\begin{split}
\Delta   \ll \ell^{\lambda/(d\lambda+1)}q^{-\lambda/(d\lambda+1)}& + \ell^{\lambda/(2(d-1)\lambda+1)}R^{-(d-2)\lambda/(2(d-1)\lambda+1)} \\
& \qquad  + R^{-\lambda/(\lambda+1)} +R^{- (d-1)\lambda/((d-1)\lambda+1)} q^{\lambda/((d-1)\lambda+1)}
\end{split}
\end{equation}
(again the term $Z_2^{-b_1}=R^{-1}$ never dominates so we have omitted it). Inserting $R=2^r$, the result follows.

\subsection{Proof of Theorem~\ref{thm:Weyl-RS}}
\label{seq:Weyl-RS}

Observe that 
\begin{equation}\label{eq:RSsplit}
\RS_f(r,\ell) = 2  R_f(r,\ell)  - \sum_{n<2^r} \e(\ell f(n)),
\end{equation}
where 
$$
R_f(r,\ell) = \sum_{n \in \cH_0(r) }  \e(\ell f(n))
$$
is the Weyl sum over the domain $\cH_0(r)$ with
$$
\cH_k(r) = \{ n<2^r :~ \chi_{11} (n) \equiv k \pmod{2} \}.
$$
Using Lemma~\ref{lem:Weyl-h} the classical Weyl sum in the second summand on the right hand side of~\eqref{eq:RSsplit} can be estimated better than our claimed bound on $\RS_f(r,\ell)$. Thus it remains to estimate the sum $R_f(r,\ell)$. This can be achieved by splitting the sum according to the values of the digits $a_{u-1}(n)$ and $a_u(n)$ of the binary expansion of $n$ in~\eqref{eq:binrep}. Indeed, we write
\begin{equation}\label{eq:u2split}
R_f(r,\ell) = \sum_{c_0,c_1 \in \{0,1\}} R_f(r,\ell,c_0,c_1)
\end{equation}
with
\begin{equation}\label{eq:U2c0c1}
R_f(r,\ell,c_0,c_1) = \sum_{\substack{n \in \cH_0(r) \\(a_u(n),a_{u-1}(n))=(c_0,c_1)}}  \e(\ell f(n) \qquad(c_0,c_1 \in \{0,1\}).
\end{equation}
As before we set $X=2^{r-u}$ where $u\in\{0,\ldots, r\}$  is to be chosen later. For every $n\in \Grs$, write $n= 2^ux +y$ with $x \in\{0,\ldots,2^{r-u}-1\}$ and $y \in \{0,\ldots,2^u-1\}$. 
Thus, for all $c_0,c_1 \in \{0,1\}$ the sum in~\eqref{eq:U2c0c1} can be written as
$$
R_f(r,\ell,c_0,c_1) =
\sum_{j=0}^1\sum_{\substack{x\in\cH(r-u,j) \\ x\equiv c_0\pmod{2}}}
\sum_{y\in\cH(u,j+c_0c_1)}   \e(\ell f(2^ux+c_12^{u-1}+y)).
$$
Applying the Cauchy inequality yields
\begin{align*}
 \big|R_f(r,\ell,&c_0,c_1)\big|^{2} 
    \le  2 X \sum_{j=0}^{1} 
\sum_{x=0}^{X-1}\left|\sum_{y\in\cH(u,j+c_0c_1)}\e\(\ell f(2^ux +c_12^{u-1}+y)\)\right|^{2}\\
&  
=2 X \sum_{j=0}^{1} 
\sum_{x=0}^{X-1} 
\sum_{y,  z \in\cH(u,j+c_0c_1)}
\e\(\ell f\(2^ux +c_12^{u-1}+y\) - f\(2^ux +c_12^{u-1} +z\)\)\\
& \le 2 X \sum_{j=0}^{1} 
\(X2^{u} +   \sum_{\substack{y,  z \in\cH(u,j+c_0c_1)\\ y \ne z}} \left|\sum_{x=0}^{X-1}\e\(  F_{y,z} (x) )\) \right|\),
\end{align*} 
where 
$$
F_{y,z} (Z) = \ell f\(2^u Z+c_12^{u-1}+ y \) - \ell f\(2^u Z+c_12^{u-1}+ z\) \in \R[Z].
$$
Hence, 
$$
R_f(r,\ell,c_0,c_1)^2  \ll 2^r X  +
X\sum_{j=0}^1    \sum_{\substack{y,  z\cH(u,j+c_0c_1)\\ y \ne z}} \left|\sum_{x=0}^{X-1}\e\(  F_{y,z} (x) \) \right|.
$$

For $y \ne z$ the polynomial $F_{y,z}$ is again of the form~\eqref{eq:Fform}.
For $y\ne z$, we now recall Lemma~\ref{lem:Weyl-h} with $h \ll \ell 2^{du}$ and we also use the trivial estimate 
$D \le h$. In particular, from~\eqref{eq:Bound T o(1)} (see also~\eqref{eq:deltawithoutD}) 
we obtain 
\begin{equation*}
\begin{split}
R_f(r,\ell,c_0,c_1)^2&  \ll  2^r X +      2^{2u}X^{2+o(1)}\(\(\ell 2^{du}Xq^{-1} +1\) \(X^{-1} + q X^{-(d-1)}\)\)^{1/(d-2)(d-1)}. 
 \end{split}
 \end{equation*}
 Because the right hand side no longer depends on $c_0$ and $c_1$, according to~\eqref{eq:u2split} we gain
\begin{equation}\label{eq:u2final}
\begin{split}
R_f(r,\ell)^2&  \ll  2^r X +      2^{2u}X^{2+o(1)}\(\(\ell 2^{du}Xq^{-1} +1\) \(X^{-1} + q X^{-(d-1)}\)\)^{1/(d-2)(d-1)}.
 \end{split}
 \end{equation}
 Since the right hand side of~\eqref{eq:u2final} coincides with the right hand side of the first
estimate in~\eqref{eq:thm2startcong}, the result follows by {\em verbatim} repeating the proof of Theorem~\ref{thm:Weyl-cong} from~\eqref{eq:thm2startcong} onwards.

\subsection{Proof of Theorem~\ref{thm:Weyl-sigmasigma}}
\label{sec::Weyl-sigmasigma}

As in Section~\ref{seq:Weyl-RS} we see that it is enough to estimate the sum 
$$
V_f(r,\ell) = \sum_{n \in \cF(r) }  \e(\ell f(n)),
$$
where
$$
\cF(r) = \{ n<2^r :~ \sigma(n) + \sigma(n+1) \equiv 0 \pmod{2} \}.
$$
Indeed, we have
$$
W_f(r,\ell) =  2 V_f(r,\ell)  - \sum_{n<2^r} \e(\ell f(n))
$$
and the last classical Weyl sum can be estimated better than our claimed bounds on $W_f(r,\ell)$, see Lemma~\ref{lem:Weyl-h}. 

Again we set $X=2^{r-u}$ where $u\in\{0,\ldots, r\}$  is to be chosen later. For every $n\in \Grs$, write $n= 2^ux +y$ with $x \in\{0,\ldots,2^{r-u}-1\}$ and $y \in \{0,\ldots,2^u-1\}$. For convenience, we set
$$
\widetilde \cF(r) = \{ n<2^r-1 :~ \sigma(n) + \sigma(n+1) \equiv 0 \pmod{2} \}
$$
and
$$
\widetilde  V_f(r,\ell) = \sum_{\substack{n \in \cF(r) \\ n \not\equiv -1\pmod{2^u} }}  \e(\ell f(n)).
$$  
Then 
\begin{equation}\label{eq:VVprime}
V_f(r,\ell) = \widetilde  V_f(r,\ell) + \sum_{\substack{n \in \cF(r) \\ n \equiv -1\pmod{2^u} }}  \e(\ell f(n)).
\end{equation}
We  estimate the sum on the left trivially by $X$. 
Thus it remains to deal with the sum $\widetilde  V_f(r,\ell)$. We have
$$
\widetilde  V_f(r,\ell) = \sum_{x=0}^{X-1} \sum_{y\in\widetilde \cF(u)} \e\(\ell f\(2^ux +y\)\).
$$
By the Cauchy inequality, we derive
\begin{align*}
 \left|\widetilde  V_f(r,\ell)\right|^{2} 
&   \le X
\sum_{x=0}^{X-1}\left|\sum_{y\in\widetilde \cF(u)}\e\(\ell f(2^ux +y)\)\right|^{2}\\
&  =X  \sum_{x=0}^{X-1} \sum_{y,  z \in\widetilde \cF(u)}
\e\(\ell f\(2^ux +y\) - f\(2^ux +z\)\)\\
& \ll X \(2^uX +   \sum_{\substack{y,  z =0\\ y \ne z}}^{2^u-1} \left|\sum_{x=0}^{X-1}\e\(  F_{y,z} (x) )\) \right|\),
\end{align*}
with $F(Z)$  as in~\eqref{eq:Fdef}.

Again, we can go on along the lines of the proof of Theorem~\ref{thm:Weyl-cong} 
until we arrive at~\eqref{eq:UwithRUkappa}. Thereby, using the notation from~\eqref{eq:RUkappa}, we gain
\begin{equation}\label{eq:VprimeLast}
 \left|\widetilde  V_f(r,\ell)\right|^{2}   \le     R^{2+o(1)} \(U^{-1} +   \(\(  \ell R U^{d-1} q^{-1} +1\) \(R^{-1} U +  R^{-(d-1)} U^{d-1} q\)\)^{\kappa} \).
\end{equation}
But if we keep in mind that the second sum on the right hand side of~\eqref{eq:VVprime} 
can be trivially estimated by $X=RU^{-1}$, squaring~\eqref{eq:VVprime} and pulling the square in, inserting~\eqref{eq:VprimeLast} yields
$$
 \left|V_f(r,\ell)\right|^{2}   \le     R^{2+o(1)} \(U^{-1} +   \(\(  \ell R U^{d-1} q^{-1} +1\) \(R^{-1} U +  R^{-(d-1)} U^{d-1} q\)\)^{\kappa} \).
$$
The proof is now finished in the same way as the proof of Theorem~\ref{thm:Weyl-cong} 

\section{Estimates of Weyl sums with a fixed sum of digits function}
\label{sec:sparseproof}

\subsection{Proof of Theorem~\ref{thm:Weyl-sparse}}
Put $X=2^{r-u}$ where $u\in\{0,\ldots,r\}$  is to be chosen later.
For every $n\in \Grs$, write $n= 2^ux +y$ with
$x \in\{0,\ldots,2^{r-u}-1\}$ and $y \in \{0,\ldots,2^u-1\}$.
Then
$$S_{f}(r, \ell, s)
=\sum_{j=0}^s \sum_{x\in\cG_{s-j} (r-u)} \, \sum_{y\in\cG_j(u)}\, 
\e\(\ell f\(2^ux +y\)\).$$
By applying the Cauchy inequality twice (first to the sum over~$j$ and then to the sum over~$x$), we have 
\begin{align*}
 \left|S_{f}(r, \ell, s)\right|^{2} 
&   \le  (s+1) \sum_{j=0}^s\binom{r-u}{s-j} 
\sum_{x=0}^{X-1}\left|\sum_{y\in\cG_j(u)}\, \e\(\ell f(2^ux +y)\)\right|^{2}\\
&  =(s+1)\sum_{j=0}^s\binom{r-u}{s-j}  \sum_{x=0}^{X-1}\\
&   \qquad   \qquad \sum_{y,  z \in\cG_j(u)}\,
\e\(\ell f\(2^ux +y\) - f\(2^ux +z\)\)\\
& \ll s \sum_{j=0}^s \binom{r-u}{s-j}  \(\binom{u}{j}X +   \sum_{\substack{y,  z \in\cG_j(u)\\ y \ne z}} \left|\sum_{x=0}^{X-1}\e\(  F_{y,z} (x) \) \right|\), 
\end{align*}
with $F_{y,z}(Z)$ as in~\eqref{eq:Fdef}.
Hence,
\begin{align*}
S_{f}(r,\ell, s)^2 & \ll sX \sum_{j=0}^s \binom{r-u}{s-j}  \binom{u}{s}
 \\
& \qquad \qquad \qquad +
s\sum_{j=0}^s \binom{r-u}{s-j}    \sum_{\substack{y,  z \in\cG_j(u)\\ y \ne z}} \left|\sum_{x=0}^{X-1}\e\(  F_{y,z} (x) \) \right| 
\\
& = sX \binom{r}{s} + s\sum_{j=0}^s \binom{r-u}{s-j}    \sum_{\substack{y,  z \in\cG_j(u)\\ y \ne z}} \left|\sum_{x=0}^{X-1}\e\(  F_{y,z} (x) \) \right|.
\end{align*}

For $y \ne z$ the polynomial $F_{y,z}$ is of the form~\eqref{eq:Fform}.
 For $y\ne z$, we now recall Lemma~\ref{lem:Weyl-h} with $h \ll \ell 2^{du}$ and we also use the trivial estimate 
$D \le h$. In particular, from~\eqref{eq:Bound T o(1)} (see also~\eqref{eq:deltawithoutD}) 
we obtain 
\begin{align*}
S_{f}(r, \ell, s)^2  \ll   sX \binom{r}{s} + &sX^{(1+o(1))} \sum_{j=0}^s  \binom{r-u}{s-j}   \binom{u}{j}^2\\  
& \quad \(\(\ell 2^{du}Xq^{-1} +1\) \(X^{-1} + q X^{-(d-1)}\)\)^{1/(d-2)(d-1)}  .
 \end{align*}
We now use that 
 $$
 \sum_{j=0}^s   \binom{r-u}{s-j}   \binom{u}{j}^2\le 2^u \sum_{j=0}^s   \binom{r-u}{s-j}   \binom{u}{j} = 2^u  \binom{r}{s}
$$ 
to derive
\begin{equation}
\label{eq:thm2start}
\begin{split}
S_{f}(r, \ell, s)^2&  \ll   s X \binom{r}{s}\\
& \qquad  + s2^{u}  X^{1+o(1)} \binom{r}{s}  \(\(\ell 2^{du}Xq^{-1} +1\) \(X^{-1} + q X^{-(d-1)}\)\)^{1/(d-2)(d-1)}  \\
& \le 2^{r+o(r)}   \binom{r}{s}\\
& \qquad \(2^{-u} +   \(\(\ell 2^{r+ (d-1)u} q^{-1} +1\) \(2^{-r+u} + q 2^{-(d-1)(r-u)}\)\)^{1/(d-2)(d-1)} \)
 \end{split}
 \end{equation}
 (note that the factor $s$ has been absorbed in the term $2^{o(r)}$). 
 We again use the notation~\eqref{eq:RUkappa} and obtain 
 \begin{align*}
 \left|S_{f}(r, \ell, s)\right|^{2} &  \le     R^{1+o(1)}   \binom{r}{s}\\
& \qquad \(U^{-1} +   \(\(  \ell R U^{d-1} q^{-1} +1\) \(R^{-1} U +   R^{-(d-1)} U^{d-1} q\)\)^{\kappa} \).
 \end{align*}  
We now expand the product and pull in the exponent to get
$$
 \left|S_{f}(r, \ell, s)\right|^{2}   \le       R^{1+o(1)}   \binom{r}{s} \Delta, 
$$ 
with $\Delta$ given by~\eqref{eq:defDeltaKappa}. Thus the result follows from~\eqref{eq:defDeltaKappaEstimate}.

\subsection{Proof of Theorem~\ref{thm:Weyl-sparseLog}}

The proof runs along similar lines.

In the same way as we derived~\eqref{eq:thm2start} we get, using~\eqref{eq:Bound T log} instead of~\eqref{eq:Bound T o(1)} and observing that $s\le r$, that
\begin{align*}
S_{f}(r, \ell, s)^2
& \ll r^22^{r}   \binom{r}{s} \(2^{-u} +   \(\(\ell 2^{r+ (d-1)u} q^{-1} +1\) \(2^{-r+u} + q 2^{-(d-1)(r-u)}\)\)^{1/(d^2-3d+4)} \).
 \end{align*}
Using the notation~\eqref{eq:RUlambda}
this becomes 
$$
S_{f}(r, \ell, s)^2   \ll     r^2R   \binom{r}{s} \(U^{-1} +   \(\(  \ell R U^{d-1} q^{-1} +1\) \(R^{-1} U +   R^{-(d-1)} U^{d-1} q\)\)^{\lambda} \).
$$
We now expand the product and pull in the exponent to get
$$
S_{f}(r, \ell, s)^2  \ll       r^2R  \binom{r}{s} \Delta, 
$$
with $\Delta$ as in~\eqref{eq:defDeltaLambda}. Thus the result follows from~\eqref{eq:defDeltaLambdaEstimate}.

\section{Proof of the equidistribution results}
\label{sec:equi}

\subsection{Discrepancy and exponential sums} 
Our goal is to estimate the discrepancies $D_f(\cE_{k,m}(r))$ and $D_f(\Grs)$ provided that the leading coefficient $\alpha_d$ of the polynomial $f$ is of Diophantine type $1$. To  
achieve this we recall  the Erd\H{o}s--Tur{\'a}n--Koksma inequality (see, for example,~\cite[Theorem~1.21]{DT:97}).

We recall that $\cL(\cI)$ denotes the   Lebesgue measure of 
an interval $\cI \subseteq[0,1]$.

\begin{lemma}
\label{lem:ET small int}
Let $y_1, \ldots, y_N$ be a sequence of $N$ points of the interval $[0,1)$.
Then for any integer $L\ge 1$, and any interval $\cI \subseteq [0,1)$,
we have
$$
|\# \{n \in\N: 1\le n\le N,\,~y_n  \in \cI\} - N\cL(\cI)|
\ll \frac{N}{L} +  \sum_{\ell=1}^L  \frac{1}{\ell} 
\left|\sum_{n=1}^N \e\(\ell y_n\)\right|.
$$
\end{lemma}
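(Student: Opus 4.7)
The plan is to derive the Erd\H{o}s--Tur\'an--Koksma inequality via Selberg's extremal trigonometric polynomials approximating the characteristic function $\chi_\cI$ of the interval $\cI$. The strategy is to bracket $\chi_\cI$ between two trigonometric polynomials of degree at most $L$ and then exploit their Fourier expansions at the points $y_n$.

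First, I would invoke Selberg's majorant/minorant construction: there exist $1$-periodic trigonometric polynomials $S^{\pm}_L$ of degree at most $L$ such that
(i) $S^{-}_L(x) \le \chi_\cI(x) \le S^{+}_L(x)$ for every $x \in \R$,
(ii) the constant Fourier coefficient satisfies $\hat S^{\pm}_L(0) = \cL(\cI) + O(1/L)$, and
(iii) the nonzero Fourier coefficients obey $|\hat S^{\pm}_L(\ell)| \ll 1/|\ell|$ for all $1 \le |\ell| \le L$.
Such polynomials are classical, built by smoothing a slightly dilated indicator against a Fej\'er-type kernel and correcting by the Beurling function; this is the substantive piece, and I would simply appeal to~\cite[Theorem~1.21]{DT:97} for the construction rather than reproducing it.

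Next, summing (i) over $n=1,\ldots,N$ gives
$$
\sum_{n=1}^N S^{-}_L(y_n) \le \#\{n\le N :~ y_n \in \cI\} \le \sum_{n=1}^N S^{+}_L(y_n),
$$
while expanding each $S^{\pm}_L$ as a finite Fourier series produces
$$
\sum_{n=1}^N S^{\pm}_L(y_n) = N \hat S^{\pm}_L(0) + \sum_{1\le |\ell| \le L} \hat S^{\pm}_L(\ell) \sum_{n=1}^N \e(\ell y_n).
$$
Applying (ii) to extract the main term $N\cL(\cI)$, (iii) to bound the oscillating tail, and pairing $\pm \ell$ via $\left|\sum_n \e(-\ell y_n)\right| = \left|\sum_n \e(\ell y_n)\right|$, one obtains
$$
\left|\#\{n\le N :~ y_n \in \cI\} - N\cL(\cI)\right| \ll \frac{N}{L} + \sum_{\ell=1}^L \frac{1}{\ell}\left|\sum_{n=1}^N \e(\ell y_n)\right|,
$$
which is exactly the claimed bound.

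The main obstacle is securing the sharp Fourier decay $|\hat S^{\pm}_L(\ell)| \ll 1/|\ell|$, since this is what produces the critical $1/\ell$ weighting in the final sum; the rest of the argument is a routine sandwich followed by Parseval-style expansion. As this extremal problem (Beurling--Selberg) is a well-established piece of Fourier analysis, the proof would in practice consist of quoting the construction from~\cite{DT:97} and then executing only the short sandwich-and-expand step displayed above.
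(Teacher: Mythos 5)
Your proposal is mathematically sound, but note that the paper does not prove this lemma at all: it is the classical Erd\H{o}s--Tur\'an--Koksma inequality, quoted verbatim from~\cite[Theorem~1.21]{DT:97} and used as a black box in Section~\ref{sec:equi}. So there is no ``paper proof'' to compare against; what you have written is the standard Beurling--Selberg argument, and it is the right one. The sandwich step, the extraction of the main term $N\cL(\cI)$ from the zeroth Fourier coefficient, and the pairing of $\pm\ell$ are all correct; with the sharp Selberg bounds $\hat S^{\pm}_L(0)=\cL(\cI)\pm 1/(L+1)$ and $|\hat S^{\pm}_L(\ell)|\le \frac{1}{L+1}+\min\bigl(\cL(\cI),\frac{1}{\pi|\ell|}\bigr)$, the extra $1/(L+1)$ in the nonzero coefficients is absorbed into the $1/\ell$ weights since $1/(L+1)\le 1/\ell$ for $\ell\le L$, so the stated bound follows.

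One small but genuine defect: you propose to ``appeal to~\cite[Theorem~1.21]{DT:97} for the construction'' of $S^{\pm}_L$, but that theorem \emph{is} the Erd\H{o}s--Tur\'an--Koksma inequality itself, i.e.\ the statement you are trying to prove --- citing it for the extremal polynomials would be circular. The majorant/minorant construction with the properties (i)--(iii) you list should instead be credited to Selberg (see, e.g., Vaaler's survey or Montgomery's \emph{Ten Lectures}, Chapter~1), or one can use the weaker but entirely elementary Fej\'er-kernel smoothing of a dilated indicator, which still yields $|\hat S^{\pm}_L(\ell)|\ll 1/|\ell| + 1/L$ and hence the same final bound up to the implied constant. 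With the reference repaired, your argument is a complete and correct proof of the lemma.
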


\subsection{Proof of  Theorem~\ref{thm:cong-Equidistr}} 
By Lemma~\ref{lem:ET small int}, for any integer $L \ge 1$ we have
\begin{equation}\label{eq:ETKE}
D_f(\cE_{k,m}(r)) \ll  
\frac{1}{L} +  \frac{1}{\#\cE_{k,m}(r)}  \sum_{\ell=1}^L \frac{1}{\ell} \left| U_{f}(r,\ell,k,m)\right|  .
\end{equation}
We now invoke  Corollary~\ref{cor:Weyl-cong-Dioph-1} and then choose $L$ in a way that makes the resulting bound optimal. In particular, using~\eqref{eq:cor2bdcong} and observing that 
$$
\# \cE_{k,m}(r)= \frac{2^r}{m} + O(2^{r(1-\delta)})
$$ 
for some explicitly computable constant $\delta >0$ (see~\cite{Gelfond:68})
we gain
\begin{align*}
 \frac{1}{\#\cE_{k,m}(r)} & \sum_{\ell=1}^L \frac{1}{\ell} \left| U_{f}(r,\ell,k,m)\right|  \\
& \qquad \quad 
\le
2^{o(r)} \(\(\frac{L^{\eta_2(d)}}{2^{\zeta_3(d)r}}\)^{\eta_1(d)/(\eta_1(d)+\eta_2(d))} + \frac{L^{\vartheta(d)}}{2^{\zeta_1(d)r}}  + 2^{-\zeta_2(d)r}\)^{1/2}.
\end{align*} 
Pulling in the exponent $1/2$ and inserting this in~\eqref{eq:ETKE} yields 
\begin{equation}\label{eq:disc2E}
\begin{split}
D_f(\cE_{k,m}(r))& \ll  2^{o(r)} \\
& \qquad   \(\(\frac{L^{\eta_2(d)}}{2^{\zeta_3(d)r}}\)^{\eta_1(d)/(2(\eta_1(d)+\eta_2(d)))} + \frac{L^{\vartheta(d)/2}}{2^{\zeta_1(d)r/2}}  + 2^{-\zeta_2(d)r/2}\) + \frac{1}{L} .
\end{split} 
\end{equation} 
We now use Lemma~\ref{lem: Optim} to optimise for $L$. Indeed, we choose 
$$
Z_1=0\mand Z_2=2^r
$$ 
and
\begin{align*}
(A_1,a_1) &= \(2^{-\frac{\zeta_3(d)\eta_1(d)}{2(\eta_1(d)+\eta_2(d))}r}, \frac{\eta_1(d)\eta_2(d)}{2(\eta_1(d)+\eta_2(d))}\),
\\
(A_2,a_2) &= \(2^{- \zeta_1(d)r/2}, \vartheta(d)/2 \), \\
(B_1,b_1) &= (1,1).
\end{align*} 
With these choices we apply Lemma~\ref{lem: Optim} to all summands on the right hand side of~\eqref{eq:disc2E} which contain $L$ (leaving the remaining summand unchanged) and obtain
$$
D_f(\cE_{k,m}(r))\ll  
2^{-\frac{d-1}{4d^2-8d+7}r+o(r)} 
+
2^{-\frac{d-2}{2d^2-2d+1}r+o(r)} 
+
2^{-\frac{1}{2d^2-6d+6}r + o(r)}
+ 
2^{-r}.
$$ 
Direct calculation shows that the terms $2^{-\frac{d-1}{4d^2-8d+7}r+o(r)}$ and $2^{-r}$ never dominate. Thus the estimate in Theorem~\ref{thm:cong-Equidistr} is established.

\subsection{Proof of  Theorem~\ref{thm:Sparse-Equidistr}} 
The proof is similar to the proof of Theorem~\ref{thm:cong-Equidistr}. Again by Lemma~\ref{lem:ET small int}, for any integer $L \ge 1$ we have
$$
D_f( \Grs) \ll  
\frac{1}{L} +  \frac{1}{\# \Grs}  \sum_{\ell=1}^L \frac{1}{\ell} \left| S_{f}(r, \ell, s)\right|  .
$$
This time we invoke  Corollary~\ref{cor:Weyl-sparse-Dioph-1} and then choose $L$ in a way that makes the resulting bound optimal. In particular, using~\eqref{eq:cor2bd} and observing that 
$$
\# \Grs={r \choose s}
$$ 
we gain as in the proof of Theorem~\ref{thm:cong-Equidistr} that
\begin{equation}\label{eq:disc2G}
\begin{split}
D_f( \Grs)& \ll  2^{(1/2+o(1))r}  \binom{r}{s}^{-1/2}\\
& \qquad  \quad \(\(\frac{L^{\eta_2(d)}}{2^{\zeta_3(d)r}}\)^{\eta_1(d)/(2(\eta_1(d)+\eta_2(d)))} + \frac{L^{\vartheta(d)/2}}{2^{\zeta_1(d)r/2}}  + 2^{-\zeta_2(d)r/2}\) + \frac{1}{L} .
\end{split} 
\end{equation}  
We now use Lemma~\ref{lem: Optim} to optimise for $L$. Indeed, we choose 
$$Z_1=0\mand Z_2={r \choose s}$$ 
and
\begin{align*}
(A_1,a_1) &= \({r \choose s}^{-1/2}2^{\frac12\(1- \frac{\zeta_3(d)\eta_1(d)}{\eta_1(d)+\eta_2(d)}\)r}, \frac{\eta_1(d)\eta_2(d)}{2(\eta_1(d)+\eta_2(d))}\),
\\
(A_2,a_2) &= \({r \choose s}^{-1/2}2^{\frac12\(1- \zeta_1(d)\)r}, \vartheta(d)/2 \), \\
(B_1,b_1) &= (1,1).
\end{align*} 
With these choices we apply Lemma~\ref{lem: Optim} to all summands on the right hand side of~\eqref{eq:disc2G} which contain $L$ (leaving the remaining summand unchanged) and obtain
\begin{align*}
D_f&( \Grs) \ll   \\
&\binom{r}{s}^{-\frac{2 d^2-4 d+3}{4d^2-8d+7}}
2^{\frac{2 d^2-5 d+4}{4d^2-8d+7}r+o(r)} 
+
\binom{r}{s}^{-\frac{d(d-1)}{2d(d-1)+1}}
2^{\frac{d(d-2)+2}{2 d(d-1)+1}r+o(r)} 
+
{r \choose s}^{-\frac{1}{2}}
2^{\frac{(d-2) (d-1)}{2 (d^2-3d+3)} r + o(r)}
\end{align*}  
(the term ${r \choose s}^{-1}$ corresponding to the pair $(B_1,b_1)$ never dominates and is therefore omitted). This estimate establishes Theorem~\ref{thm:Sparse-Equidistr}.

\section{Comments}

We remark that out treatment of the sums $U_f(r,\ell)$  and $V_f(r,\ell)$ in 
Sections~\ref{seq:Weyl-RS} and~\ref{sec::Weyl-sigmasigma} respectively can easily be extended to sums with  with more general congruence conditions.  In particular, our methods allow to improve the bounds of~\cite[Theorem~1]{Emi3} 
as well as of~\cite[Theorem~3.4]{ThTi} (and of the more general~\cite[Theorem~2.2]{PfTh}) for values of the leading coefficient $\alpha_d$ corresponding to the minor arcs in Waring's problem. This leads to an improvement on the number of summands in the versions of Waring's problem proved in these papers.

\section*{Acknowledgements}
During preparation this work, the first author   was  supported in part  by ARC Grant DP170100786
and   the second author  by the FWF Grants I~3466 and P~29910.

\end{document}